\renewcommand{\@algocf@capt@boxed}{above}
\newtheorem{theorem}{Theorem}
\newtheorem{proposition}[theorem]{Proposition}
\newtheorem{lemma}[theorem]{Lemma}
\newtheorem{corollary}[theorem]{Corollary}
\theoremstyle{definition}
\newtheorem{example}{Example}
\title{Parametrizing Arf numerical semigroups}
\author[Garc\'{\i}a-S\'anchez]{P. A. Garc\'{\i}a-S\'anchez}
\address{IEMath-GR and Departamento de \'Algebra, Universidad de Granada, E-18071 Granada, Espa\~na}
\email{pedro@ugr.es} 
\author[Heredia]{B. A. Heredia}
\address{Departamento de Matem\'atica e Centro de Matem\'atica e Aplica\c{c}oes (CMA), FCT, Universidade Nova de Lisboa}
\email{b.heredia@fct.unl.pt}
\thanks{The second author is supported by the Funda\c{c}\~ao para a Ci\^encia e a Tecnologia (Portuguese Foundation for Science and Technology) through the project UID/MAT/00297/2013 (Centro de Matem\'atica e Aplica\c{c}\~oes).}
\author[Karaka\c{s}]{H. \.{I}. Karaka\c{s}}
\address{Faculty of Commercial Sciences, Ba\c{s}kent University, Ankara, Turkey}
\email{karakas@baskent.edu.tr}
\author[Rosales]{J. C. Rosales}
\address{Departamento de \'Algebra, Universidad de Granada, E-18071 Granada, Espa\~na}
\email{jrosales@ugr.es} 
\thanks{The authors are supported by the projects MTM2014-55367-P, FQM-343,  FQM-5849, and FEDER funds}
\keywords{numerical semigroup, Arf, genus, Frobenius number, integer partition}
\subjclass[2010]{20M14, 11D07, 05A17, 14E15}
\begin{document}

\begin{abstract}
We present procedures to calculate the set of Arf numerical semigroups with given genus, given conductor and given genus and conductor. We characterize the Kunz coordinates of an Arf numerical semigroup. We also describe Arf numerical semigroups with fixed Frobenius number and multiplicity up to six.
\end{abstract}

\maketitle

\section*{Introduction}

Du Val showed in \cite{Du-Val} how multiplicity sequences of the successive blow-ups of a curve can be used to classify singularities. His approach was geometric in nature and asked, while he was presenting his results in the University of Istambul, if there were an algebraic counterpart of his findings. Arf, who was attending Du Val's lecture said that the computation of Du Val's characters could be calculated by algebraic means, and after a week he showed how to do this. The results were published in \cite{arf} and later these characters were called Arf Characters of a curve.  

Arf's idea was to calculate what Limpan called later in \cite{lipman} the Arf ring closure of the coordinate ring of the curve, and then its value semigroup (which is an Arf numerical semigroup). The minimal generators of this semigroup are the Arf characters. 

The idea of using valuations and numerical semigroups to study curves was not new (this was already carried out by Zariski for plane algebraic curves \cite{zariski}), nor the idea of producing successive blowing ups of the curve (in three dimensions, this was done already by Semple in \cite{semple}).  Sert\"oz in \cite{sertoz} presents a historic overview and motivation of the problem (see also his appendix to \cite{arf-cw}).  

Arf numerical semigroups are always of maximal embedding dimension (see \cite{b-d-f} for this and other maximal properties on numerical semigroups and one dimensional local rings). This family of numerical semigroups has several nice properties; we summarize some of them. They are closed under finite intersections and if we adjoin to an Arf numerical semigroup its Frobenius number, then the resulting semigroup is again an Arf semigroup (\cite{arf-num-sem}; in other words, the set of Arf numerical semigroups is a Frobenius variety, see \cite{houston}). Arf numerical semigroups can be also defined by the patterns \cite{maria1, patterns} $x+y-z$ or $2x-y$: for all $x$, $y$ and $z$ nonnegative integers in the semigroup, if $x\ge y\ge z$, then $x+y-z$ is again in the semigroup. Moreover quotients (or fractions) by positive integers of Arf numerical semigroup are again Arf, see \cite{d-s}. Also parameters of algebro-geometric codes associated to these semigroups are well understood \cite{arf-codes}. Arf semigroups are acute semigroups, that is,  the last interval of gaps before the conductor is smaller than the previous interval of gaps \cite{maria-acute}.

In this manuscript we describe a way to calculate all Arf numerical semigroups with a prescribed genus and/or conductor. This is accomplished by means of Arf sequences, associating to each of these sequences an Arf numerical semigroup. We also characterize the Kunz coordinates of an Arf numerical semigroup. 

In the last section, with the use of Ap\'ery sets, we show how to parametrically describe all Arf numerical semigroups with fixed Frobenius number and multiplicity up to six.

The algorithms presented have been implemented in \texttt{GAP} \cite{GAP} and will appear in a forthcoming version of the accepted  \texttt{GAP} package \texttt{numericalsgps} \cite{numericalsgps}. The development version of \texttt{numericalsgps} is freely available at \url{https://bitbucket.org/gap-system/numericalsgps}. The reader interested in the implementation may have a look at the manual and the file \texttt{arf-med.gi} in the \texttt{gap} folder of the package.

\section{Notation}

We will follow the notation of \cite{ns}. The reader interested in plane curves and numerical semigroups can have a look at \cite[Chapter 4]{ns-app}. A nice description of one dimensional analytically irreducible local rings and their value semigroups can be found in \cite{b-d-f} (we also recommend this manuscript a good explanation on how the terminology used in numerical semigroups comes from Algebraic Geometry).

A \emph{numerical semigroup} $S$ is a submonoid of $\mathbb N$, the set of nonnegative integers, under addition and with finite complement in $\mathbb N$. A nonnegative integer $g$ not in $S$ is known as a \emph{gap} of $S$, and the cardinality of the set of gaps of $S$, $\mathbb N\setminus S$, is the \emph{genus} of $S$ (or degree of singularity of $S$, \cite{b-d-f}), denoted $\mathrm g(S)$. 

As $\mathbb N\setminus S$ has finitely many elements, the set $\mathbb Z\setminus S$ (with $\mathbb Z$ the set of integers) has a maximum, which is known as the \emph{Frobenius number} of $S$, $\mathrm F(S)$. In fact, the \emph{conductor} of $S$, denoted here by $\mathrm c(S)$, is the Frobenius number of $S$ plus one (\cite{b-d-f} explains the relationship with the conductor of the semigroup ring associated to $S$). 

For a nonempty set of nonnegative integers $A$ we denote by 
\[
\langle A\rangle =\left\{ \sum\nolimits_{a\in A} \lambda_a a\mid \lambda_a\in \mathbb N \hbox{ for all } a\in A\right\},
\]
the submonoid of $\mathbb N$ generated by $A$, where the sums have all but finitely many $\lambda_a$ equal to zero. We say that $A$ \emph{generates} $S$ if $\langle A\rangle =S$, and that $A$ is a \emph{minimal generating system} of $S$ if $A$ generates $S$ and no proper subset of $A$ has this property. Every numerical semigroup $S$ has a unique minimal generating system: $S^*\setminus (S^*+S^*)$, where $S^* =S\setminus\{0\}$ \cite[Chapter 1]{ns}. This minimal generating system must contain the \emph{multiplicity} of $S$, denoted $\mathrm m(S)$, which is the least positive integer in $S$.  The cardinality of $S^*\setminus (S^*+S^*)$ is the \emph{embedding dimension} of $S$. 

Since two minimal generators cannot be congruent modulo the multiplicity of $S$, it 
follows that the embedding dimension of $S$ is less than or equal to its multiplicity. Numerical semigroups attaining this upper bound are called \emph{maximal embedding dimension numerical semigroups}. There are many characterizations of the maximal embedding dimension property. One of them is the following (see for instance \cite{b-d-f} or \cite[Chapter 2]{ns}): a numerical semigroup has maximal embedding dimension if and only if for every $x,y\in S\setminus\{0\}$, the integer $x+y-\mathrm m(S)$ is in $S$.   

We are interested in this manuscript in a subfamily of maximal embedding numerical semigroups, which is the set of Arf numerical semigroups. A numerical semigroup has the \emph{Arf property} if for every $x,y,z\in S$ with $x\ge y\ge z$, we have $x+y-z\in S$ (from this definition it follows easily that Arf numerical semigroups have maximal embedding dimension). The Arf property on $S$ is equivalent to: $2x-y\in S$ for every $x,y\in S$ with $x\ge y$. 

Let $I\subseteq \mathbb Z$. We say that $I$ is a relative \emph{ideal} of $S$ if $I+S\subseteq I$ and there exists an integer $i$ such that $i+I\subseteq S$. Given $I$ and $J$ ideals of $S$, the set 
\[
I-_\mathbb Z J=\{ z\in \mathbb Z\mid z+J\subseteq I\}
\]
is again an ideal of $S$, as it is $nI=\{i_1+\cdots+i_n\mid i_1,\ldots, i_n\in I\}$ \cite{b-d-f}. The \emph{Lipman semigroup} of $S$ with respect to $I$ is defined as 
\[
\mathrm L(S,I)=\bigcup_{n\in\mathbb N} (nI-_\mathbb Z nI),
\]
and it is also called the \emph{semigroup obtained from $S$ by blowing-up $I$} \cite[Section I.2]{b-d-f}. 

An ideal $I$ is \emph{proper} if $I\subseteq S$. There is only a maximal proper ideal of $S$ with respect to set inclusion, and this ideal is precisely $\mathrm M(S)=S^*$ (so numerical semigroups are ``local"). We will refer to $\mathrm L(S)=\mathrm L(S,S^*)$ as the \emph{Lipman semigroup} of $S$. It can be shown (see for instance \cite[I.2.4]{b-d-f}) that if $\{n_1,\ldots,n_e\}=S^*\setminus (S^*+S^*)$ is the minimal generating set of $S$ with $n_1<\cdots < n_e$, then 
\[
\mathrm L(S)= \langle n_1,n_2-n_1,\ldots, n_e-n_1\rangle.
\]

\begin{example}\label{ex-3-5-7}
Let $S=\langle 3,5,7\rangle = \{0,3,5,\to \}$ (here $\to$ denotes that all integers larger than $5$ are in the semigroup; we are denoting in this way that the conductor of $S$ is $5$). Then $\mathrm L(S)=\langle 2,3\rangle$ and $\mathrm L(\mathrm L(S))=\mathbb N$. We obtain in this way a multiplicity sequence $3,2,1$ of the successive blowing-ups with respect to the maximal ideal.

Observe that in this setting $S=\{0, 3, 3+2, 3+2+1,\to\}$. 

If we repeat this calculations with $T=\langle 3,5\rangle$, then we have again  $\mathrm L(S)=\langle 2,3\rangle$ and $\mathrm L(\mathrm L(S))=\mathbb N$; whence the multiplicity sequence here is the same. However $T$ is not the semigroup ``spanned'' by this multiplicity sequence, which in this case is $S$.
\end{example}

The property that pops up in the above example is not accidental. Indeed by \cite[Theorem I.3.4]{b-d-f}, a numerical semigroup $S$ has the Arf property if and only if 
\begin{equation}\label{arf-mult-seq}
S=\left\{0, \mathrm{m}(S), \mathrm{m}(S)+\mathrm{m}(\mathrm L(S)),\dots, \sum\nolimits_{i=1}^n \mathrm m(\mathrm L^i(S)),\to\right\},
\end{equation}
where $\mathrm L^i(S)$ is defined recursively as follows: $\mathrm L^0(S)=S$ and for every positive integer $i$, $\mathrm L^i(S)=\mathrm L(\mathrm L^{i-1}(S))$. The integer $n$ can be taken to be the minimum such that $\mathrm L^{n+1}(S)=\mathbb N$; and so $\mathrm m(\mathrm L^n(S))\ge 2$.

\section{Arf sequences}

We say that a sequence of integers $(x_1,\ldots,x_n)$ is an \emph{Arf sequence} provided that 
\begin{itemize}
\item $x_n\ge \cdots \ge x_1\ge 2$ and

\item  $x_{i+1}\in \{x_i,x_i+x_{i-1},\ldots, x_i+\cdots+x_1, \to\}$.
\end{itemize}

The following result (rephrased to our needs) supports this notation.

\begin{proposition}[{\cite[Corollary 39]{belga}}]\label{seq-ns}
Let $S$ be a nonempty proper subset of $\mathbb N$. Then $S$ is an Arf numerical semigroup if and only if there exists an Arf sequence $(x_1,\ldots, x_n)$ such that $S=\{0,x_n, x_n+x_{n-1},\ldots, x_n+\cdots+x_1,\to\}$.
\end{proposition}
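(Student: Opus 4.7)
The plan is to prove both directions through the multiplicity-sequence characterization \eqref{arf-mult-seq}. For necessity, I would take the sequence $(\mathrm m(\mathrm L^i(S)))_i$, reverse it, and check that it satisfies the Arf-sequence axioms. For sufficiency, I would induct on the length $n$, reconstructing the semigroup of $(x_1,\ldots,x_n)$ as $\{0\}\cup(x_n+T)$, where $T$ is the semigroup associated to $(x_1,\ldots,x_{n-1})$.

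For the sufficiency direction, the base case $n=1$ is straightforward: $S=\{0,x_1,\to\}$ with $x_1\ge 2$ is a numerical semigroup, and for $a\ge b\ge c$ in $S$ the quantity $a+b-c$ is nonnegative and either $0$ or $\ge x_1$. For $n\ge 2$, the truncation $(x_1,\ldots,x_{n-1})$ is itself an Arf sequence, so by induction $T=\{0,x_{n-1},x_{n-1}+x_{n-2},\ldots,x_{n-1}+\cdots+x_1,\to\}$ is an Arf numerical semigroup. The key observation is that the Arf-sequence axiom at step $n$ reads $x_n\in T$; thus $S:=\{0\}\cup(x_n+T)$ coincides with the set displayed in the proposition. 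Closure under addition: for $a=x_n+t_a$ and $b=x_n+t_b$ in $x_n+T$, one has $a+b=x_n+(x_n+t_a+t_b)\in x_n+T$, using $x_n\in T$ and the semigroup structure of $T$. The Arf property: for $a\ge b\ge c$ in $S$, the case $c=0$ reduces to closure; otherwise all three lie in $x_n+T$, and writing $a=x_n+t_a$, etc., the Arf property of $T$ yields $t_a+t_b-t_c\in T$, whence $a+b-c=x_n+(t_a+t_b-t_c)\in S$.

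For necessity, apply \eqref{arf-mult-seq} to write $S=\{0,m_0,m_0+m_1,\ldots,m_0+\cdots+m_N,\to\}$ with $m_i=\mathrm m(\mathrm L^i(S))$ and $N$ minimal with $\mathrm L^{N+1}(S)=\mathbb N$, so $m_N\ge 2$. Set $x_j=m_{N+1-j}$ for $j=1,\ldots,N+1$; then the above form of $S$ matches the one in the proposition. Monotonicity $m_{i+1}\le m_i$ follows from the identity $\mathrm L^{i+1}(S)=\langle m_i,n_2-m_i,\ldots,n_e-m_i\rangle$, where $m_i=n_1<\cdots<n_e$ are the minimal generators of $\mathrm L^i(S)$, so its multiplicity is at most $m_i$. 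The recursive condition $x_{j+1}\in\{x_j,\ldots,x_j+\cdots+x_1,\to\}$ becomes, after applying \eqref{arf-mult-seq} to the Arf semigroup $\mathrm L^{i+1}(S)$, the assertion $m_i\in\mathrm L^{i+1}(S)\setminus\{0\}$, which holds because $m_i$ appears as a generator of $\mathrm L^{i+1}(S)$.

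The crux of the argument is the inductive step of sufficiency: the fact that $\{0\}\cup(x_n+T)$ is a semigroup hinges precisely on $x_n\in T$, which is exactly the content of the Arf-sequence axiom. A subsidiary point worth flagging is that the Arf property is preserved under the Lipman blow-up $\mathrm L$---this is a standard fact that is implicitly used whenever \eqref{arf-mult-seq} is applied recursively to $\mathrm L^{i+1}(S)$.
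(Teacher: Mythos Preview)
Your proof is correct and considerably more explicit than the paper's. The paper does not argue this proposition from scratch: it simply observes that the statement is \cite[Corollary~39]{belga} with the cosmetic adjustment that sequences there allow $x_1\ge 1$, whereas here one insists on $x_1\ge 2$ (dropping trailing $1$'s does not change the resulting semigroup). Your argument, by contrast, is essentially self-contained for sufficiency---the inductive reconstruction $S=\{0\}\cup(x_n+T)$, together with the observation that the Arf-sequence axiom at step $n$ is precisely the statement $x_n\in T$, is clean and makes transparent why that axiom is exactly what is needed for closure. For necessity you invoke \eqref{arf-mult-seq}, so you trade the dependence on \cite{belga} for a dependence on \cite[Theorem~I.3.4]{b-d-f}; this is a natural route in the context of the present paper, since the multiplicity-sequence characterization is already recalled in the text and is in any case what motivates the definition of Arf sequence. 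The point you flag at the end---that $\mathrm L$ preserves the Arf property---is indeed needed and is standard (it can be read off from \cite[Theorem~I.3.4]{b-d-f} itself, since \eqref{arf-mult-seq} applied to $\mathrm L(S)$ gives the shifted multiplicity sequence).
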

\begin{proof}
Notice that in \cite[Corollary 39]{belga} the condition on $x_1$ is $x_1 \ge 1$. Notice that we can omit all $x_i=1$ in the sequence since in this way the resulting semigroup is the same, and we are considering the multiplicity sequence up to $\mathrm L^n(S)\neq \mathbb N$ and $\mathrm L^{n+1}(S)=\mathbb N$.	
\end{proof}
Given an Arf sequence $(x_1,\ldots,x_n)$, we will denote by $\mathrm S(x_1,\ldots, x_n)$ the associated Arf numerical semigroup given in Proposition \ref{seq-ns}, and we will say that it is the \emph{Arf numerical semigroup associated to $(x_1,\ldots, x_n)$}.

Hence for every Arf sequence $(x_1,\ldots,x_n)$, $\mathrm S(x_1,\ldots,x_n)$ is a numerical semigroup not equal to $\mathbb N$ and with the Arf property. And given an Arf numerical semigroup $S\neq \mathbb N$, according to \eqref{arf-mult-seq} and Proposition \ref{seq-ns}, if $n$ is a positive integer such that  $\mathrm L^n(S)\subsetneq \mathrm L^{n+1}(S)=\mathbb N$, the sequence $(\mathrm m(\mathrm L^n(S)), \mathrm m(\mathrm L^{n-1}(S)), \ldots, \mathrm m(S))$ is an Arf sequence.  This proves the following.

\begin{corollary}\label{bijection}
Let $\mathcal S$ be the set of Arf sequences, and let $\mathcal A$ be the set of all Arf numerical semigroups. The mapping  
\[
\begin{matrix}
\mathrm S: \mathcal S\to \mathcal A\setminus \{\mathbb N\},\\
	 (x_1,\ldots,x_n)\mapsto \mathrm S(x_1,\ldots, x_n)
\end{matrix}
\] is a bijection, and its inverse is the map $S\mapsto (\mathrm m(\mathrm L^n(S)), \mathrm m(\mathrm L^{n-1}(S)), \ldots, \mathrm m(S))$.
\end{corollary}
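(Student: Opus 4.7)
The plan is to verify that the two compositions $\mathrm S \circ \Psi$ and $\Psi \circ \mathrm S$ are the identity, where $\Psi\colon \mathcal A\setminus\{\mathbb N\}\to \mathcal S$ denotes the candidate inverse sending $S$ to $(\mathrm m(\mathrm L^N(S)),\mathrm m(\mathrm L^{N-1}(S)),\ldots,\mathrm m(S))$ with $N$ the minimal index satisfying $\mathrm L^{N+1}(S)=\mathbb N$. Everything needed for well-definedness and for $\mathrm S\circ \Psi=\mathrm{id}$ is already laid out above: Proposition \ref{seq-ns} guarantees that $\mathrm S$ produces an Arf numerical semigroup, and $x_n\ge 2$ precludes the output from being $\mathbb N$; the paragraph preceding the statement spells out that $\Psi(S)$ is an Arf sequence (the existence and uniqueness of $N$ following from \eqref{arf-mult-seq}); and $\mathrm S(\Psi(S))=S$ is a direct reading of \eqref{arf-mult-seq}.

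The real content lies in $\Psi\circ \mathrm S=\mathrm{id}_{\mathcal S}$. Starting from an Arf sequence $(x_1,\ldots,x_n)$, set $S=\mathrm S(x_1,\ldots,x_n)=\{0,x_n,x_n+x_{n-1},\ldots,x_n+\cdots+x_1,\to\}$. Since $x_i\ge 2$ for every $i$, consecutive partial sums differ by at least two, hence the $n+1$ listed integers are precisely the elements of $S$ in $[0,\mathrm c(S)]$, and $\mathrm c(S)=x_1+\cdots+x_n$. Applying \eqref{arf-mult-seq} to the same $S$ gives a second such listing via the partial sums of the multiplicity sequence $\mathrm m(S),\mathrm m(\mathrm L(S)),\ldots,\mathrm m(\mathrm L^N(S))$; each summand is again at least $2$, since $1\in \mathrm L^i(S)$ would force $\mathrm L^i(S)=\mathbb N$ and contradict the minimality of $N$ for $i\le N$. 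Thus the Lipman-side listing also exhausts $S\cap [0,\mathrm c(S)]$ with no collisions.

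Both listings are now strictly increasing enumerations of the same finite set $S\cap[0,\mathrm c(S)]$, so they coincide term by term. Matching cardinalities forces $n+1=N+2$, i.e.\ $N=n-1$, and matching entries yields $x_{n-i}=\mathrm m(\mathrm L^i(S))$ for every $0\le i\le n-1$; this is precisely the identity $\Psi(\mathrm S(x_1,\ldots,x_n))=(x_1,\ldots,x_n)$. I do not anticipate a deep obstacle: the only real subtlety is ruling out hidden collisions between the listed partial sums and the tail ``$\to$'', and this is secured on both sides by the uniform lower bound of $2$ on the relevant multiplicities.
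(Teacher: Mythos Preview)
Your proof is correct and follows the same route the paper takes: both rely on Proposition~\ref{seq-ns} and \eqref{arf-mult-seq} to see that $\mathrm S$ and $\Psi$ are well defined and mutually inverse. The paper is terser---it simply asserts ``This proves the following'' after noting well-definedness of both maps---whereas you spell out the injectivity direction $\Psi\circ\mathrm S=\mathrm{id}$ by matching the two strictly increasing enumerations of $S\cap[0,\mathrm c(S)]$; this extra detail is a genuine addition, not a different strategy.
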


It is then clear that counting Arf numerical semigroups is tightly related to counting Arf sequences. Moreover, if we are looking for numerical semigroups with a prescribed genus or Frobenius number, the following result will be of great help.

\begin{proposition}\label{genus-frob-seq}
Let $(x_1,\ldots, x_n)$ be an Arf sequence. Then
\begin{enumerate}[(i)]
\item $\mathrm F(\mathrm S(x_1,\ldots,x_n))=x_1+\cdots + x_n-1$ (and thus $\mathrm c(\mathrm S(x_1,\ldots,x_n))=x_1+\cdots + x_n)$),
\item $\mathrm g(\mathrm S(x_1,\ldots,x_n))=x_1+\cdots+x_n-n$.
\end{enumerate}
\end{proposition}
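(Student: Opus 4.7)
My plan is to read both statements off directly from the explicit description of $\mathrm S(x_1,\ldots,x_n)$ provided by Proposition \ref{seq-ns}. That proposition tells us
\[
\mathrm S(x_1,\ldots,x_n) = \{0,\, x_n,\, x_n+x_{n-1},\, \ldots,\, x_n+\cdots+x_1,\to\},
\]
so the entire argument reduces to understanding this listed set and using the defining condition $x_i\ge 2$ of an Arf sequence.

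For (i), I would first observe that $x_1+\cdots+x_n$ is listed as belonging to the semigroup and that every integer strictly greater is in the semigroup too (that is what the ``$\to$'' means). Hence $\mathrm F(\mathrm S(x_1,\ldots,x_n))\le x_1+\cdots+x_n-1$. To get equality, I need to check that $x_1+\cdots+x_n-1$ is not in $\mathrm S(x_1,\ldots,x_n)$. The elements of the semigroup strictly less than $x_1+\cdots+x_n$ are precisely $0$ and the partial sums $x_n+\cdots+x_k$ for $k\in\{2,\ldots,n\}$. Equality $x_1+\cdots+x_n-1=x_n+\cdots+x_k$ would force $x_1+\cdots+x_{k-1}=1$, which is impossible since every $x_i\ge 2$; equality with $0$ is impossible for the same reason. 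Therefore $x_1+\cdots+x_n-1$ is a gap, giving the claimed Frobenius number (and the conductor follows at once).

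For (ii), I would simply count. The set of nonnegative integers not exceeding $\mathrm F(\mathrm S(x_1,\ldots,x_n))=x_1+\cdots+x_n-1$ has cardinality $x_1+\cdots+x_n$. Among these, the elements belonging to the semigroup are exactly $0$ together with the $n-1$ partial sums $x_n,\,x_n+x_{n-1},\,\ldots,\,x_n+\cdots+x_2$; the fact that these partial sums are pairwise distinct (and nonzero) is immediate from $x_i\ge 2$. That gives $n$ non-gaps at most the Frobenius number, hence $\mathrm g(\mathrm S(x_1,\ldots,x_n))=x_1+\cdots+x_n-n$.

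There is essentially no hard step: the only thing one must be mindful of is using $x_i\ge 2$ to guarantee that the partial sums $x_n,\,x_n+x_{n-1},\,\ldots,\,x_n+\cdots+x_1$ are strictly increasing and that none of them coincides with $x_1+\cdots+x_n-1$. The whole proof is then a two-line bookkeeping argument based on Proposition \ref{seq-ns}.
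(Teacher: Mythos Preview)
Your proof is correct and follows essentially the same approach as the paper: both arguments read the Frobenius number and genus directly off the explicit list $\{0,x_n,x_n+x_{n-1},\ldots,x_n+\cdots+x_1,\to\}$ given by Proposition~\ref{seq-ns}, using $x_i\ge 2$ to ensure the relevant integers are gaps. The only cosmetic difference is that in~(ii) the paper lists the gaps in each interval between consecutive partial sums and adds up $(x_n-1)+\cdots+(x_1-1)$, whereas you count the $n$ non-gaps up to the Frobenius number and subtract; these are two sides of the same count.
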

\begin{proof}
In order to ease the notation, set $S=\mathrm S(x_1,\ldots, x_x)$, which by Proposition \ref{seq-ns} we know it is an Arf numerical semigroup. From the very construction of $S$, we have that the conductor of $S$ is at most $x_1+\cdots+x_n$.

\begin{enumerate}[(i)]
\item  From the above paragraph, it suffices to show that $x_1+\cdots+x_n-1\not \in S$. But this follows easily from the fact that $x_1\ge 2$.

\item We can explicitly write the set of gaps of $S$, 
\begin{multline*}
\mathbb N\setminus S=\{1,\ldots,x_n-1,x_n+1,\ldots, x_n+x_{n-1}-1,\ldots,x_n+\cdots+x_2-1,\\ x_n+\cdots+x_2+1,\ldots, x_n+\cdots+x_1-1\}.
\end{multline*}
It follows that $\mathrm g(S)= (x_n-1)+(x_{n-1}-1)+\cdots+(x_1-1)=x_1+\cdots+x_n-n$.\qedhere
\end{enumerate}
\end{proof}

\section{The set of Arf numerical semigroups with given conductor}

Let $c$ be a positive integer. In light of Corollary \ref{bijection} and Proposition \ref{genus-frob-seq}, in order to calculate the set of Arf numerical semigroups with conductor $c$ we only have to calculate some specific integer partitions of $c$, and then their images via the map $\mathrm S$. We can compute the set of integer partitions of $c$ with the help of \cite{partitions} or the built-in \texttt{GAP} command \texttt{partitions}, and either filter those having 1's or while constructing them avoid 1's in the partition. However, the number of partitions grows exponentially (for instance \texttt{NrPartitions(100)} in \texttt{GAP} yields 190569292), and then we must choose which partitions are Arf sequences. We do not have, as in the case of saturated numerical semigroups a ``next" function that, given an Arf sequence, computes the next in a prescribed ordering \cite{sat}.

In this section we present an alternative to the approach of computing all partitions and filter those that are Arf sequence. The procedure dynamically calculates the set of all Arf numerical semigroups with conductor less than or equal to $C$. The main idea is based on the following result, which allows to construct all Arf sequences of length $k+1$ from the set of Arf sequences of length $k$. Its proof follows directly from the definition of Arf sequence.

\begin{proposition}\label{seq-recur}
Let $\mathcal S$ be the set of all Arf sequences and let $k$ be a positive integer.
\begin{enumerate}[(i)]
\item If  $(x_1,\ldots, x_{k+1})\in \mathcal S$, then $(x_1,\ldots, x_k)\in \mathcal S$.
 
\item If $(x_1,\ldots, x_k)\in \mathcal S$, then $(x_1,\ldots, x_k,x_{k+1})\in \mathcal S$ for all $x_{k+1}\in \mathrm S(x_1,\ldots,x_k)^*$.
\end{enumerate}
\end{proposition}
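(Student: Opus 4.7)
The plan is to simply unfold the two defining conditions of an Arf sequence and check them directly; no auxiliary machinery beyond Proposition \ref{seq-ns} is needed, which is why the authors call it an immediate consequence of the definition.

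For part (i), both defining conditions of an Arf sequence are statements quantified over an initial segment of indices. The monotonicity $x_k \ge \cdots \ge x_1 \ge 2$ is a subchain of $x_{k+1}\ge x_k \ge \cdots \ge x_1\ge 2$, and the membership condition $x_{i+1}\in\{x_i,x_i+x_{i-1},\ldots,x_i+\cdots+x_1,\to\}$ for $i=1,\ldots,k-1$ is a subset of the conditions for $i=1,\ldots,k$. So the truncated tuple $(x_1,\ldots,x_k)$ inherits the Arf property for free.

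For part (ii), the one thing to verify is that appending $x_{k+1}\in \mathrm S(x_1,\ldots,x_k)^*$ preserves both bullets. By Proposition \ref{seq-ns},
\[
\mathrm S(x_1,\ldots,x_k)^* = \{x_k,\ x_k+x_{k-1},\ \ldots,\ x_k+\cdots+x_1,\ \to\},
\]
which is exactly the set appearing in the second bullet for the index $i=k$. Hence the membership condition at the new step is automatic. For monotonicity, every element of this set is at least $x_k$, so $x_{k+1}\ge x_k$, which (combined with the already-assumed chain $x_k\ge\cdots\ge x_1\ge 2$) gives $x_{k+1}\ge x_k\ge\cdots\ge x_1\ge 2$.

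There is no serious obstacle here; the only mild subtlety is to remember that $\mathrm S(x_1,\ldots,x_k)^*$ is described explicitly by Proposition \ref{seq-ns}, so the second bullet of the Arf-sequence definition has been engineered to match exactly the nonzero elements of the associated semigroup. Once that identification is made, both (i) and (ii) are one-line verifications.
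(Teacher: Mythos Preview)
Your argument is correct and matches the paper's approach: the authors state (without writing out the details) that the proof ``follows directly from the definition of Arf sequence,'' and your verification is exactly that unfolding. The only point worth noting is that you invoke Proposition~\ref{seq-ns} to identify $\mathrm S(x_1,\ldots,x_k)^*$ with $\{x_k,\,x_k+x_{k-1},\ldots,x_k+\cdots+x_1,\to\}$, whereas strictly speaking this description is already the \emph{definition} of $\mathrm S(x_1,\ldots,x_k)$ given right after that proposition; either way the logic is the same.
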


Let us denote by $\mathcal S_k$ the set of all Arf sequences of length $k$, and for a positive integer $n$, set 
\[\mathcal S_k(n)=\{(x_1,\ldots,x_k)\in \mathcal S_k \mid x_1+\cdots+x_k\le n\}.\] 
As a consequence of the last result and that $x_1\ge 2$ we obtain the following. We use $(x,y]$, with $x,y\in \mathbb N$, to denote the interval of real numbers $r$ such that $x<r\le y$.

\begin{corollary}\label{seq-k-rec}
Let $\{b_i\}_{i\in\mathbb N}\subseteq \mathbb N$ be such that $0\le b_{i+1}-b_i\le 2$ for all $i\in \mathbb N$.
\begin{enumerate}[(i)]
\item $\mathcal S_1(b_1)= \big\{ (x_1) \mid x_1\in \{2,\ldots, b_1\}\big\}$.
\item For $k\in \mathbb N^*$, \[\mathcal S_{k+1}(b_{k+1})=\left\{ (x_1,\ldots, x_{k+1})~\middle\vert ~ \begin{matrix} (x_1,\ldots,x_k)\in \mathcal S_k(b_k)\\ x_{k+1}\in \mathrm S(x_1,\ldots,x_k)\cap (0,b_{k+1}-\sum_{i=1}^k x_i]\end{matrix}\right\}.\]
\end{enumerate}
\end{corollary}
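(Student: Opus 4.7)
The plan is to deduce both parts of the corollary directly from Proposition \ref{seq-recur} together with the definitions, with only a small bookkeeping step for the sum bound. Part (i) is essentially a definitional check: a length-one sequence $(x_1)$ is an Arf sequence precisely when $x_1\ge 2$, because the recursive condition on $x_{i+1}$ is vacuous in this case; intersecting with the constraint $x_1\le b_1$ yields the stated description.

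For part (ii) I would prove the two inclusions separately. For $\supseteq$, suppose $(x_1,\ldots,x_k)\in\mathcal S_k(b_k)$ and $x_{k+1}\in\mathrm S(x_1,\ldots,x_k)\cap(0,b_{k+1}-\sum_{i=1}^k x_i]$. Since $x_{k+1}>0$, it actually lies in $\mathrm S(x_1,\ldots,x_k)^*$, so Proposition \ref{seq-recur}(ii) gives that $(x_1,\ldots,x_{k+1})$ is an Arf sequence. The upper endpoint of the interval yields $\sum_{i=1}^{k+1} x_i\le b_{k+1}$, placing the extended sequence in $\mathcal S_{k+1}(b_{k+1})$.

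For $\subseteq$, take $(x_1,\ldots,x_{k+1})\in\mathcal S_{k+1}(b_{k+1})$. Proposition \ref{seq-recur}(i) immediately yields that $(x_1,\ldots,x_k)$ is an Arf sequence, and the defining property of Arf sequences gives $x_{k+1}\in\mathrm S(x_1,\ldots,x_k)^*\subseteq\mathrm S(x_1,\ldots,x_k)$, with $x_{k+1}>0$ and $x_{k+1}\le b_{k+1}-\sum_{i=1}^k x_i$. The one point requiring care is verifying $(x_1,\ldots,x_k)\in\mathcal S_k(b_k)$, i.e.\ that $\sum_{i=1}^k x_i\le b_k$. This is precisely where the hypothesis $b_{i+1}-b_i\le 2$ enters: using $x_{k+1}\ge 2$, one has $\sum_{i=1}^k x_i\le b_{k+1}-x_{k+1}\le b_{k+1}-2\le b_k$.

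I do not anticipate a genuine obstacle; the argument is essentially a re-indexing, and the only subtle point is squeezing out the lower bound $x_{k+1}\ge 2$ in order to match the step size $b_{i+1}-b_i\le 2$. A hasty proof that uses only $x_{k+1}\ge 1$ would force the stronger hypothesis $b_{i+1}-b_i\le 1$ and fail in general.
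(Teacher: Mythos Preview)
Your proof is correct and follows exactly the approach the paper intends: the corollary is stated as an immediate consequence of Proposition~\ref{seq-recur} together with the fact that every entry of an Arf sequence is at least $2$, and you have spelled this out carefully, including the one genuine point---using $x_{k+1}\ge 2$ against the hypothesis $b_{k+1}-b_k\le 2$ to recover $\sum_{i=1}^k x_i\le b_k$.
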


In light of Proposition \ref{genus-frob-seq} (i) and Corollary \ref{seq-k-rec}, for the calculation of set of Arf numerical semigroups with conductor less than or equal to $c$ it is enough to calculate $\mathcal S_k(c)$ for $k\in\{1,\ldots,\lfloor c/2\rfloor\}$ (notice that the elements in an Arf sequence are greater than or equal to $2$). This is described in Algorithm \ref{alg:arf-frob-up-to}.

\begin{algorithm}[h]\caption{ArfNumericalSemigroupsWithConductorUpTo\label{alg:arf-frob-up-to}}

\KwData{A positive integer $c$}

\KwResult{The set of all Arf numerical semigroups with conductor less than or equal to $c$}

\For{$k\in \{1,\ldots, \lfloor c/2\rfloor \}$}{
Compute $\mathcal S_k(c)$  \tcc*{use Corollary \ref{seq-k-rec}}
}
$L=\bigcup_{k=1}^{\lfloor c/2\rfloor} \mathcal S_k(c)$\;
\Return $\{\mathbb N\}\cup\{ \mathrm S(x_1,\ldots, x_n) \mid (x_1,\ldots, x_n)\in L\}$
\end{algorithm}

\begin{figure}[h]
\begin{tabular}{|l|l||l|l||l|l||l|l|}\hline
$F$ & na($F$) & $F$ & na($F$) & $F$ & na($F$) & $F$ & na($F$) \\ \hline  \hline
1&1&26&111&51&1643&76&5494\\ 
2&1&27&176&52&1196&77&9215\\ 
3&2&28&138&53&2043&78&5707\\ 
4&2&29&239&54&1289&79&10469\\ 
5&4&30&150&55&2339&80&6709\\ 
6&3&31&298&56&1563&81&10822\\ 
7&7&32&211&57&2513&82&7698\\ 
8&6&33&341&58&1854&83&12951\\ 
9&10&34&268&59&3134&84&7705\\ 
10&9&35&440&60&1852&85&14028\\ 
11&17&36&279&61&3542&86&9399\\ 
12&12&37&535&62&2414&87&15011\\ 
13&25&38&389&63&3823&88&10395\\ 
14&20&39&616&64&2726&89&17538\\ 
15&32&40&448&65&4499&90&10381\\ 
16&27&41&778&66&2809&91&19147\\ 
17&49&42&490&67&5184&92&12425\\ 
18&34&43&936&68&3501&93&20048\\ 
19&68&44&642&69&5542&94&13988\\ 
20&49&45&1001&70&3866&95&23263\\ 
21&80&46&759&71&6645&96&13876\\ 
22&66&47&1300&72&3936&97&25560\\ 
23&118&48&808&73&7413&98&16839\\ 
24&77&49&1496&74&4992&99&26734\\ 
25&145&50&1028&75&7829&100&17903\\  \hline
\end{tabular}
\caption{Number of Arf numerical semigroups with Frobenius number up to 100}
\label{fig-table-frob-100}
\end{figure}

Figure \ref{fig-table-frob-100} shows the number of Arf numerical semigroups of Frobenius number up to 100. We already have many functions in the package \texttt{numericalsgps} computing families of numerical semigroups with a given Frobenius number, and thus we decided in our implementation of Arf numerical semigroups with given conductor to use the Frobenius number instead. The calculation of the table took 36 seconds on a laptop. However, we still do not know how many numerical semigroups there are with Frobenius number 100; so the approach of considering them all and filtering those that are Arf was rejected from the very beginning. For instance, for $F=35$, there are 292081 numerical semigroups; among these,  8959 have maximal embedding dimension and only 440 have the Arf property. Figure \ref{fig-comp-frob-saturated} compares the number of numerical semigroups with given Frobenius number that are saturated (as calculated in \cite{sat}) with those that are Arf. 
\begin{figure}

\begin{tikzpicture}[scale=.75]
\pgfplotsset{every axis legend/.append style={
at={(1.02,1)},
anchor=north west}}

    \begin{axis}[
	width=12cm,
        xlabel=Frobenius number,
        ylabel=\# numerical semigroups
]

    \addplot[smooth,color=red,mark=x]
        plot coordinates {
(1,1)
(2,1)
(3,2)
(4,2)
(5,4)
(6,3)
(7,7)
(8,5)
(9,9)
(10,8)
(11,16)
(12,7)
(13,21)
(14,14)
(15,25)
(16,18)
(17,39)
(18,16)
(19,50)
(20,22)
(21,52)
(22,40)
(23,84)
(24,20)
(25,92)
(26,53)
(27,103)
(28,54)
(29,144)
(30,39)
(31,175)
(32,68)
(33,166)
(34,105)
(35,240)
(36,49)
(37,280)
(38,131)
(39,285)
(40,113)
(41,378)
(42,88)
(43,439)
(44,155)
(45,389)
(46,233)
(47,597)
(48,79)
(49,624)
(50,239)
(51,628)
(52,266)
(53,828)
(54,170)
(55,909)
(56,284)
(57,865)
(58,440)
(59,1210)
(60,95)
(61,1267)
(62,490)
(63,1208)
(64,443)
(65,1522)
(66,303)
(67,1785)
(68,528)
(69,1612)
(70,662)
(71,2228)
(72,197)
(73,2291)
(74,816)
(75,2124)
(76,779)
(77,2783)
(78,491)
(79,3157)
(80,728)
(81,2775)
(82,1200)
(83,3765)
(84,282)
(85,3789)
(86,1347)
(87,3752)
(88,1196)
(89,4681)
(90,506)
(91,5039)
(92,1336)
(93,4574)
(94,1878)
(95,5973)
(96,463)
(97,6307)
(98,1944)
(99,5894)
(100,1605)
};
    \addlegendentry{saturated}

\addplot[smooth,mark=o,blue] plot coordinates {

(1,1)
(2,1)
(3,2)
(4,2)
(5,4)
(6,3)
(7,7)
(8,6)
(9,10)
(10,9)
(11,17)
(12,12)
(13,25)
(14,20)
(15,32)
(16,27)
(17,49)
(18,34)
(19,68)
(20,49)
(21,80)
(22,66)
(23,118)
(24,77)
(25,145)
(26,111)
(27,176)
(28,138)
(29,239)
(30,150)
(31,298)
(32,211)
(33,341)
(34,268)
(35,440)
(36,279)
(37,535)
(38,389)
(39,616)
(40,448)
(41,778)
(42,490)
(43,936)
(44,642)
(45,1001)
(46,759)
(47,1300)
(48,808)
(49,1496)
(50,1028)
(51,1643)
(52,1196)
(53,2043)
(54,1289)
(55,2339)
(56,1563)
(57,2513)
(58,1854)
(59,3134)
(60,1852)
(61,3542)
(62,2414)
(63,3823)
(64,2726)
(65,4499)
(66,2809)
(67,5184)
(68,3501)
(69,5542)
(70,3866)
(71,6645)
(72,3936)
(73,7413)
(74,4992)
(75,7829)
(76,5494)
(77,9215)
(78,5707)
(79,10469)
(80,6709)
(81,10822)
(82,7698)
(83,12951)
(84,7705)
(85,14028)
(86,9399)
(87,15011)
(88,10395)
(89,17538)
(90,10381)
(91,19147)
(92,12425)
(93,20048)
(94,13988)
(95,23263)
(96,13876)
(97,25560)
(98,16839)
(99,26734)
(100,17903)
};
  \addlegendentry{Arf}

    \end{axis}
    \end{tikzpicture}

\caption{Comparison with the number of saturated numerical semigroups}
\label{fig-comp-frob-saturated}
\end{figure}

\begin{example}
Let us compute the set of numerical semigroups with conductor less than or equal to six and with the Arf property. By Figure \ref{fig-table-frob-100} we already know that we have ten of them (eleven counting $\mathbb N$: 1+1+1+2+2+4; we have to go up to Frobenius number 5). As we have pointed above we must calculate $\mathcal S_k(6)$ for $k\in\{1,2,3\}$. 
\begin{itemize}
\item $\mathcal S_1(6)=\{ (2), (3), (4), (5), (6)\}$,
\item $\mathcal S_2(6)=\{ (2,2), (3,3), (2,3), (2,4)\}$,
\item $\mathcal S_3(6)=\{(2,2,2)\}$.
\end{itemize}
Now we have to translate these sequences to numerical semigroups via the map $\mathrm S$. For instance $\mathrm S(2,3)=\{0,3,5,\to\}=\langle 3,5,7\rangle$. We then obtain 
\begin{itemize}
\item $\langle 2,3\rangle$, $\langle 3,4,5\rangle$,  $\langle 4,5,6,7\rangle$, $\langle 5,6,7,8,9\rangle$, $\langle 6,7,8,9,10,11\rangle$,
\item $\langle 2,5\rangle$, $\langle 3,7,8\rangle$, $\langle 3,5,7\rangle$, $\langle 4,6,7,9\rangle$,
\item $\langle 2,7\rangle$.
\end{itemize}
Finally, we have to add $\mathbb N$.  In a \texttt{GAP} session with the package \texttt{numericalsgps} we would proceed as follows:
\begin{verbatim}
gap> la5:=ArfNumericalSemigroupsWithFrobeniusNumberUpTo(5);;
gap> List(la5,MinimalGeneratingSystem);                     
[ [1], [ 2, 3 ], [ 3 .. 5 ], [ 4 .. 7 ], [ 5 .. 9 ], [ 6 .. 11 ], 
  [ 2, 5 ], [ 3, 5, 7 ], [ 4, 6, 7, 9 ], [ 3, 7, 8 ], [ 2, 7 ] ]
\end{verbatim}
As we mentioned in the introduction, adjoining the Frobenius number to an Arf numerical semigroup yields another Arf numerical semigroup. Figure \ref{fig-hasse-frob-5} represents the Hasse diagram of all numerical semigroups conductor less than or equal to 6 and with the Arf property.
\end{example}

\begin{figure}
\includegraphics[width=.5\textwidth]{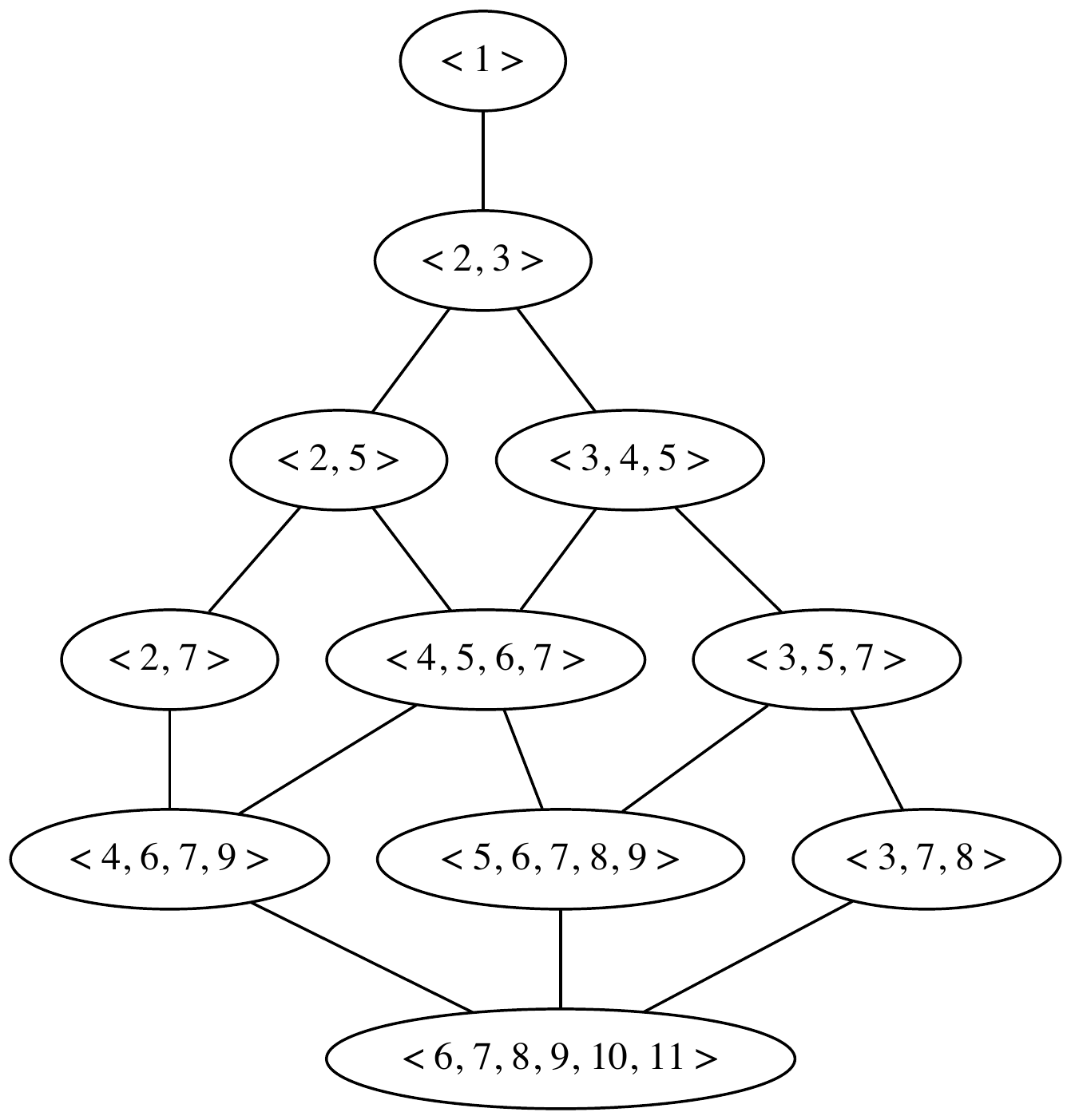}
\caption{Hasse diagram of Arf numerical semigroups with conductor up to six.}
\label{fig-hasse-frob-5}
\end{figure}

\section{Arf numerical semigroups with given genus}

As in the previous section we are again interested in Arf sequences with particular characteristics. In this case, by Proposition \ref{genus-frob-seq} (ii), the length of the sequence is also relevant.  If we fix the genus $g$, we need to calculate, for every suitable $k$, $\mathcal S_k(g+k)$, and then take the union of all of them. Also, in contrast to the conductor case, $k$ can range up to $g$, since $\mathrm g(\mathrm S(x_1,\ldots, x_n))= x_1+\cdots +x_n -n\le g$ and $x_i\ge 2$, forces $2n-n\le g$, that is, $n\le g$. 

In order to use recursion we must be able to construct $\mathcal S_{k+1}(g+k+1)$ from $\mathcal S_k (g+k)$. We can do this by using Corollary \ref{seq-k-rec} with $b_i=g+i$ for all $i\in\mathbb N$.


Algorithm \ref{alg:arf-genus-up-to} gathers the procedure to calculate all Arf numerical semigroups with genus up to $g$.

\begin{algorithm}\caption{ArfNumericalSemigroupsWithGenusUpTo\label{alg:arf-genus-up-to}}

\KwData{A positive integer $g$}

\KwResult{The set of all Arf numerical semigroups with genus less than or equal to $g$}

\For{$k\in \{1,\ldots, g \}$}{
Compute $\mathcal S_k(g+k)$  \tcc*{use Corollary \ref{seq-k-rec}}
}
$L=\bigcup_{k=1}^{g} \mathcal S_k(g+k)$\;
\Return $\{\mathbb N\}\cup\{ \mathrm S(x_1,\ldots, x_n) \mid (x_1,\ldots, x_n)\in L\}$
\end{algorithm}

\begin{example}\label{ex-lag5}
Let us apply the procedure in this section to calculate all Arf numerical semigroups with genus less than or equal to 5. We have to compute $\bigcup_{k=1}^g S_k(g+k)$. 
\begin{itemize}
\item $\mathcal S_1(5+1)= \{  ( 2 ), ( 3 ), ( 4 ), ( 5 ), ( 6 ) \}$, 
\item $\mathcal S_2(5+2)=  \{ ( 2, 2 ), ( 2, 3 ), ( 2, 4 ), ( 2, 5 ), ( 3, 3 ), ( 3, 4 ) \}$,
\item $\mathcal S_3(5+3)=\{  ( 2, 2, 2 ), ( 2, 2, 4 ), ( 2, 3, 4 ) \}$, 
\item $\mathcal S_4(5+4)= \{ ( 2, 2, 2, 2 ) \}$, 
\item $\mathcal S_5(5+5)=  \{ ( 2, 2, 2, 2, 2 ) \}$.
\end{itemize}

Next, we have to compute the image of each of them under $\mathrm S$, and finally add $\mathbb N$. In \texttt{GAP}, we can do this with the package \texttt{numericalsgps} as follows.
\begin{verbatim}
gap> lag5:=ArfNumericalSemigroupsWithGenusUpTo(5);;
gap> List(lag5,MinimalGeneratingSystem);
[ [ 1 ], [ 2, 3 ], [ 2, 5 ], [ 2, 7 ], [ 2, 9 ], [ 2, 11 ], 
  [ 4, 6, 9, 11 ], [ 3, 5, 7 ], [ 3, 8, 10 ], [ 4, 6, 7, 9 ], 
  [ 5, 7, 8, 9, 11 ], [ 3 .. 5 ], [ 3, 7, 8 ], [ 4, 7, 9, 10 ], 
  [ 4 .. 7 ], [ 5 .. 9 ], [ 6 .. 11 ] ]
\end{verbatim}
\end{example}

\begin{figure}
\begin{tabular}{|l|l||l|l||l|l||l|l|}\hline
$g$ & na($g$) & $g$ & na($g$) & $g$ & na($g$) & $g$ & na($g$) \\ \hline  \hline
1&1&26&251&51&2504&76&12275\\ 
2&2&27&284&52&2694&77&12979\\ 
3&3&28&317&53&2904&78&13701\\ 
4&4&29&355&54&3131&79&14468\\ 
5&6&30&393&55&3358&80&15295\\ 
6&8&31&433&56&3605&81&16114\\ 
7&10&32&487&57&3851&82&16959\\ 
8&13&33&538&58&4112&83&17840\\ 
9&17&34&594&59&4391&84&18765\\ 
10&21&35&658&60&4699&85&19738\\ 
11&26&36&721&61&5022&86&20781\\ 
12&31&37&793&62&5365&87&21864\\ 
13&36&38&866&63&5705&88&22993\\ 
14&47&39&946&64&6074&89&24163\\ 
15&55&40&1037&65&6472&90&25351\\ 
16&62&41&1138&66&6881&91&26581\\ 
17&74&42&1234&67&7307&92&27899\\ 
18&87&43&1338&68&7767&93&29246\\ 
19&101&44&1452&69&8240&94&30664\\ 
20&116&45&1584&70&8740&95&32139\\ 
21&133&46&1720&71&9265&96&33657\\ 
22&152&47&1861&72&9813&97&35228\\ 
23&174&48&2008&73&10386&98&36882\\ 
24&196&49&2164&74&10999&99&38602\\ 
25&222&50&2332&75&11620&100&40412\\ \hline
\end{tabular}
\caption{Number of Arf numerical semigroups with genus less than or equal to 100.}
\label{table-genus}
\end{figure}

Notice that in contrast to the sequence of the number of Arf numerical semigroups with given Frobenius number (Figure \ref{fig-table-frob-100}), we see in Figure \ref{table-genus}, that in the case of counting with respect to the genus, the resulting sequence is increasing.

From \cite[Section~2]{arf-num-sem}, we know that the tree of Arf numerical semigroups is a binary tree. This tree is constructed as follows.

Let $A$ be a nonempty set of positive integers with greatest common divisor one. The intersection of all Arf numerical semigroups containing $A$ is an Arf semigroup (every numerical semigroup containing $A$ must also contain $\langle A\rangle$; whence there are only finitely many containing $A$). We denote by $\mathrm{Arf}(A)$ this numerical semigroup. 

Given an Arf numerical semigroup $S$, we say that $A$ is an Arf system of generators of $S$ if $\mathrm{Arf}(A)=S$; and it is a minimal Arf system of generators if no proper subset of $A$ has this property. The elements of $A$ are called minimal Arf generators of $S$. 

The tree of Arf numerical semigroups is constructed recursively by removing  minimal Arf generators greater than the Frobenius number for each semigroup in the tree. Lemma 12 in \cite{arf-num-sem} states that at most two minimal Arf generators are greater than the Frobenius number of an Arf semigroup, and according to its proof these are the Frobenius number plus one and plus two. This is why the tree is binary. Also, a leaf in this tree is an Arf numerical semigroup with no minimal Arf generators above its Frobenius number.

The absence of leafs would explain the increasing of the sequence in Figure \ref{table-genus}. However this is not the case, there are plenty of leaves in this tree. For instance, $\mathrm F(\mathrm{Arf}(5,7))=8$, and consequently $\mathrm{Arf}(5,7)$ is a leaf in the binary tree of Arf numerical semigroups (this example has genus 6, all the semigroups appearing in Example \ref{ex-lag5} have descendants in the tree). Figure \ref{fig:bin-tree-6} depicts the binary tree of Arf numerical semigroups up to genus 6; the shaded node corresponds to the unique leaf in the tree of all Arf numerical semigroups. Each layer corresponds to a different genus.

\begin{figure}
\centering
\includegraphics[width=\textwidth]{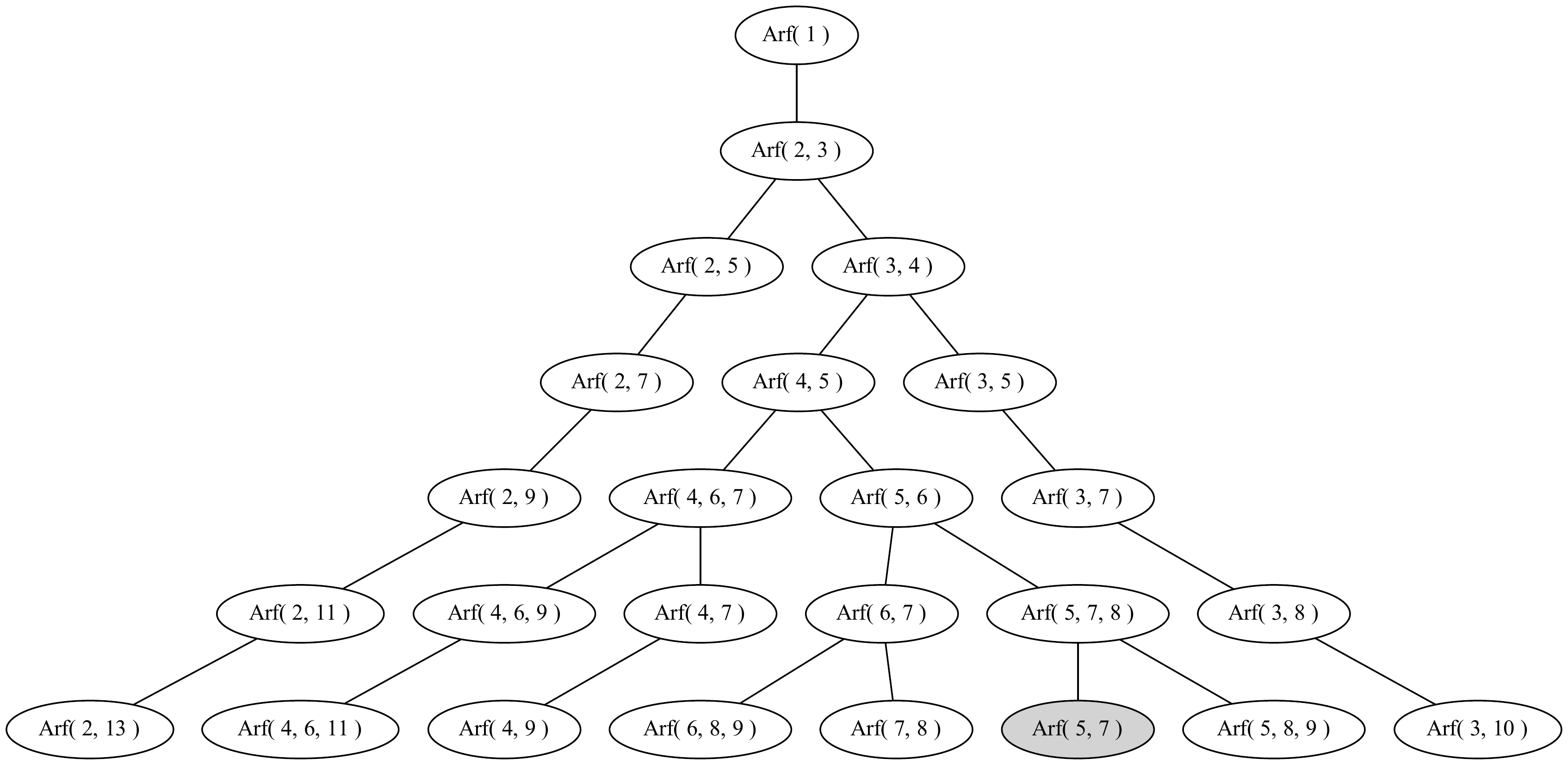}
\caption{The binary tree of Arf numerical semigroups of genus up to 6.}
\label{fig:bin-tree-6}
\end{figure}

\section{Fixing the genus and the conductor}

In this section we are interested in calculating the set of all Arf numerical semigroups with fixed genus $g$ and conductor $c$. It is well known, and easy to prove, that if $S$ is a numerical semigroup, then $2\mathrm g(S)\ge \mathrm c(S)$ (see for instance \cite[Lemma 2.14]{ns}). From the definition of genus and Frobenius number it also follows easily that $\mathrm g(S)\le \mathrm F(S)$. The only numerical semigroup with genus equal to zero is $\mathbb N$; so we may assume that 
\[
1\le g\le c-1<2g.
\] 
As a consequence of Proposition \ref{genus-frob-seq}, we have the following restriction on the lengths of Arf sequences yielding semigroups with prescribed genus $g$ and conductor $c$.

\begin{corollary}
Let $(x_1,\ldots,x_n)$ be an Arf sequence and let $S=\mathrm S(x_1,\ldots,x_n)$. Then $n=\mathrm c(S)-\mathrm g(S)$.
\end{corollary}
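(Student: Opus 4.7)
The plan is to simply combine the two formulas in Proposition \ref{genus-frob-seq}. Part (i) gives $\mathrm c(S) = x_1 + \cdots + x_n$, and part (ii) gives $\mathrm g(S) = x_1 + \cdots + x_n - n$. Subtracting the second from the first cancels the sum $x_1+\cdots+x_n$ and leaves exactly $n$.

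So the proof is a one-liner: write
\[
\mathrm c(S) - \mathrm g(S) = (x_1+\cdots+x_n) - (x_1+\cdots+x_n - n) = n,
\]
citing Proposition \ref{genus-frob-seq} for both equalities. There is no real obstacle here since Proposition \ref{genus-frob-seq} is already proved, and the corollary is advertised as ``a consequence'' of it; the main purpose of the statement is to record the restriction it imposes on $n$ for use in the algorithmic construction that follows (where one ranges over Arf sequences of exactly length $c-g$ whose entries sum to $c$).
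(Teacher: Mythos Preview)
Your proof is correct and matches the paper's approach exactly: the paper states this corollary without proof, introducing it simply ``as a consequence of Proposition \ref{genus-frob-seq}'', which is precisely the subtraction you carry out.
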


\begin{algorithm}[h]
\KwData{Positive integers $g$ and $c$ with $1\le g\le c-1<2g$}

\KwResult{The set of all Arf numerical semigroups with genus $g$ and conductor $c$}

$n=c-g$\;
$L_1=\{ (x_1)\mid x_1\in \{2,\ldots, \lfloor c/n\rfloor\}$\;
\For{$k\in \{2,\ldots,  n \}$}{
$L_{k}=\left\{(x_1,\ldots,x_k)~\middle \vert~ \begin{matrix}
(x_1,\ldots,x_{k-1})\in L_{k-1},  x_k\in\mathrm S(x_1,\ldots,x_{k-1})^* \\ x_1+\cdots+x_{k-1}+(n-(k-1))x_k\le c\end{matrix}\right\}$\;
}
$L=\{(x_1,\ldots,x_n)\in L_n\mid x_1+\cdots+x_n=c\}$\;
\Return $\{ \mathrm S(x_1,\ldots, x_n) \mid (x_1,\ldots, x_n)\in L\}$
\caption{ArfNumericalSemigroupsWithGenusAndFrobeniusNumber\label{alg:arf-genus-frob}}
\end{algorithm}

The correctness of Algorithm \ref{alg:arf-genus-frob} follows from the next two observations. If $(x_1,\ldots,x_n)$ is an Arf sequence and $\mathrm c(\mathrm S(x_1,\ldots, x_n))=c$, then by Proposition \ref{genus-frob-seq}, $c=x_1+\cdots +x_n$.
\begin{enumerate}[(i)]
\item As $x_n\ge \cdots \ge x_1$, we deduce $nx_1\le c$. This implies $x_1\le \lfloor c/n\rfloor$.

\item Also from $x_n\ge \cdots\ge x_k$, we deduce $x_1+\cdots +x_{k-1}+(n-(k-1))x_k\le c$. 
\end{enumerate}

Figure \ref{fig:plot3d} depicts Arf numerical semigroups with genus $g$ ranging from 1 to 20 and conductor from $g+1$ to $2g$.

\begin{figure}[h]
\centering

\begin{tabular*}{.4\linewidth}{@{\extracolsep{\fill}}|r|rrrrrrrrrrrrrrrrrrrr}
$g$\\ \cline{1-1}
1& 1\\ 
2& 1& 1\\ 
3& 1& 1& 1\\ 
4& 1& 2& 0& 1\\ 
5& 1& 2& 2& 0& 1\\ 
6& 1& 3& 2& 1& 0& 1\\ 
7& 1& 3& 3& 1& 1& 0& 1\\ 
8& 1& 4& 3& 3& 0& 1& 0& 1\\ 
9& 1& 4& 6& 1& 3& 0& 1& 0& 1\\ 
10& 1& 5& 5& 5& 1& 2& 0& 1& 0& 1\\ 
11& 1& 5& 8& 4& 3& 1& 2& 0& 1& 0& 1\\ 
12& 1& 6& 8& 6& 2& 4& 0& 2& 0& 1& 0& 1\\ 
13& 1& 6& 11& 5& 5& 0& 4& 0& 2& 0& 1& 0& 1\\ 
14& 1& 7& 11& 12& 3& 5& 1& 3& 0& 2& 0& 1& 0& 1\\ 
15& 1& 7& 15& 8& 10& 2& 4& 1& 3& 0& 2& 0& 1& 0& 1\\ 
16& 1& 8& 14& 16& 4& 6& 1& 5& 0& 3& 0& 2& 0& 1& 0& 1\\ 
17& 1& 8& 19& 13& 10& 4& 7& 0& 5& 0& 3& 0& 2& 0& 1& 0& 1\\ 
18& 1& 9& 19& 19& 8& 11& 1& 7& 1& 4& 0& 3& 0& 2& 0& 1& 0& 1\\ 
19& 1& 9& 23& 18& 18& 3& 10& 1& 6& 1& 4& 0& 3& 0& 2& 0& 1& 0& 1\\ 
20& 1& 10& 23& 29& 9& 13& 4& 8& 1& 7& 0& 4& 0& 3& 0& 2& 0& 1& 0& 1\\  
  \multicolumn{21}{c}{conductor from $g+1$ to $2g$} 
\end{tabular*} \phantom{lalalalaaaaaaaaaa}
\begin{tikzpicture}[scale=.5]
\begin{axis}[xlabel={$g$},ylabel={$c$},zlabel={\# Arf},
	xmin=1,xmax=20,
	ymin=1,ymax=39,
	zmin=0,zmax=29,
	enlargelimits=upper,
	ytick={0,10,...,40}
]

	\addplot3[only marks, mark size=2] 
		file {data-g-f.dat};
\end{axis}
\end{tikzpicture}

\caption{Number of Arf numerical semigroups with genus $g$ and conductor $c$}
\label{fig:plot3d}
\end{figure}

\section{Kunz coordinates of Arf numerical semigroups}

Let $S$ be a numerical semigroup and $s\in S^*$. Recall that the \emph{Ap\'ery set} of $s$ in $S$ is defined as 
\[
\mathrm{Ap}(S,s)=\{ n\in S\mid n-s\not\in S\}.
\] 
It is well known (see for instance \cite[Chapter 1]{ns}) that 
\begin{equation}\label{eq:ap}
\begin{matrix}
\mathrm{Ap}(S,s)=\{w(0)=0,w(1),\ldots, w(s-1)\},\\
w(i)=\min\{n \in S\mid n\bmod s=i\},\ i\in\{1,\ldots, s-1\}.
\end{matrix} 
\end{equation}

Observe that for every $z\in \mathbb Z$, there exists $k\in \mathbb N$ and $i\in\{0,\ldots,s-1\}$ such that $z=ks+w(i)$. Moreover, $z\in S$ if and only if $k\ge 0$ (see \cite[Chapter 1]{ns}). We will use this well known fact implicitly in this section.

If in addition $S$ is an Arf numerical semigroup and $m$ is its multiplicity, then we know that $S$ has maximal embedding dimension and thus $(\mathrm{Ap}(S,m)\setminus\{0\})\cup\{m\}=\{m,w(1),\ldots, w(m-1)\}$ is the minimal generating system of $S$.

For every $k\in \{1,\ldots, m-1\}$, $w(k)=x_km+k$ for some positive integer $x_k$. We say that $(x_1,\ldots, x_{m-1})$ are the \emph{Kunz coordinates} of $S$ \cite{kunz}. Notice that in this definition we can take $k=0$ and obtain $x_0=0$. We are not including $x_0$ in the sequence of Kunz coordinates because it is always $0$. We will use this implicitly in what follows.

We will fix the multiplicity, $m$, and for an integer $i$, we will write \[\overline{i} =i\bmod m\] (the remainder of the division of $i$ by $m$). 

Every Arf numerical semigroup has maximal embedding dimension, and thus its Kunz coordinates fulfill the following system of inequalities \cite{ns-coord}.
\begin{equation}\label{eq:med}
\begin{array}{cc}
x_i\geq 1 & \hbox{for all } i\in \{1,\ldots,m-1\},\\
x_i+x_j-x_{i+j}\geq 1 & \hbox{for all } 1\leq i\leq j\leq m-1, i+j\leq m-1,\\
x_i+x_j-x_{\overline{i+j}}\geq 0 &\hbox{for all } 1\leq i\leq j\leq m-1, i+j> m.
\end{array}
\end{equation}

Notice also that for every $i,j\in \{0,\ldots, m-1\}$, $w(i)+w(j)\in S$ and $w(i)+w(j)\equiv i+j \pmod m$. Hence $w(j)+w(j)=km+w(\overline{i+j})$. This implies that $(w(i)+w(j)-w(\overline{i+j}))/m\in \mathbb N$. So we define the \emph{cocycle} of $S$ with respect to $m$ as 
\[
\mathrm h(i,j)=\frac{w(i)+w(j)-w(\overline{i+j})}{m}.
\]

Next we see how the Arf condition is written in terms of cocycles.

Given a rational number $q$, denote by 
\[\lceil q\rceil = \min (\mathbb Z\cap [q,\infty)),\ \lfloor q\rfloor =\max (\mathbb Z\cap (-\infty,q]).\]

\begin{lemma}\label{ArfCocycleToCocycle}
Let \(S\) be a numerical semigroup with multiplicity $m$. Then $S$ is an Arf numerical semigroup if and only if for every $i,j\in\{0,\ldots, m-1\}$, 
\begin{enumerate}[(i)]
\item if $\lceil (w(i)-w(j))/m\rceil \ge 0$, 
\begin{equation}\label{eq:ArfCondToCocycle-g}
\mathrm h(j,j)-\mathrm h(\overline{2j-i}  , i) + 2\left\lceil \frac{w(i)-w(j)}{m}\right\rceil \geq 0;
\end{equation}
\item if $\lceil (w(i)-w(j))/m\rceil < 0$, 
\begin{equation}\label{eq:ArfCondToCocycle-l}
\mathrm h(j,j)-\mathrm h(\overline{2j-i}  , i) + \left\lceil \frac{w(i)-w(j)}{m}\right\rceil \geq 0.
\end{equation}
\end{enumerate}
\end{lemma}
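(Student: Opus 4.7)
The plan is to translate the Arf condition $2x-y\in S$ (for $x,y\in S$ with $x\ge y$) into cocycle language by writing every relevant integer in its Apéry normal form $km+w(\ell)$, then identifying the worst-case choice of $x,y$ with fixed residues $j$ and $i$ modulo $m$.

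First I would parameterize: write $x = am + w(j)$ and $y = bm + w(i)$ with $a,b\in\mathbb N$ and $i,j\in\{0,\ldots,m-1\}$, and recall that an integer $z = km + w(\ell)$ belongs to $S$ iff $k\ge 0$. The inequality $x\ge y$ becomes
\[
a-b \;\ge\; \frac{w(i)-w(j)}{m}, \qquad\text{i.e.}\qquad a-b\ge \delta := \left\lceil \frac{w(i)-w(j)}{m}\right\rceil,
\]
since $a-b$ is an integer.

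The key computation is to express $2x-y$ in Apéry normal form. From the definition of $\mathrm h$, we have $2w(j)=\mathrm h(j,j)\,m + w(\overline{2j})$ and, because $\overline{2j-i}+i\equiv 2j\pmod m$,
\[
w(\overline{2j-i}) + w(i) \;=\; \mathrm h(\overline{2j-i},i)\,m + w(\overline{2j}).
\]
Subtracting yields $2w(j)-w(i) = \bigl(\mathrm h(j,j)-\mathrm h(\overline{2j-i},i)\bigr)m + w(\overline{2j-i})$, and hence
\[
2x-y \;=\; \Bigl(2a-b + \mathrm h(j,j)-\mathrm h(\overline{2j-i},i)\Bigr)m + w(\overline{2j-i}).
\]
So $2x-y\in S$ is equivalent to the bracketed coefficient being nonnegative. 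The Arf property therefore amounts to requiring
\[
2a - b + \mathrm h(j,j)-\mathrm h(\overline{2j-i},i) \ge 0
\]
for every $i,j\in\{0,\ldots,m-1\}$ and every $a,b\in\mathbb N$ satisfying $a-b\ge\delta$.

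The last step is to replace this universally quantified statement by its tightest instance, i.e., to minimize $2a-b$ over $a,b\in\mathbb N$ with $a-b\ge\delta$. If $\delta\ge 0$, the constraint forces $a\ge b+\delta\ge\delta$, whence $2a-b\ge b+2\delta\ge 2\delta$, achieved at $(a,b)=(\delta,0)$. If $\delta<0$, one may take $a=0$ and $b=-\delta$, giving $2a-b=\delta$, and this is optimal because for any $a\ge 1$ the minimum over admissible $b$ is $2a-(a-\delta)=a+\delta\ge 1+\delta>\delta$. Substituting these two minima recovers exactly the inequalities (\ref{eq:ArfCondToCocycle-g}) and (\ref{eq:ArfCondToCocycle-l}), and the equivalence runs both ways because every pair $x\ge y$ in $S$ is of the form considered.

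The main obstacle I anticipate is bookkeeping: one must keep the two cocycle arguments in the correct order (in particular $\mathrm h(\overline{2j-i},i)$ rather than $\mathrm h(i,\overline{2j-i})$, although $\mathrm h$ is symmetric so this is cosmetic) and check that the minima of $2a-b$ are actually attained by \emph{admissible} pairs $(x,y)\in S\times S$, which is guaranteed because $(a,b)=(\delta,0)$ and $(a,b)=(0,-\delta)$ have nonnegative integer entries in their respective regimes.
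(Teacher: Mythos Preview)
Your proof is correct and follows essentially the same route as the paper: both reduce the Arf condition to the nonnegativity of the coefficient $2a-b+\mathrm h(j,j)-\mathrm h(\overline{2j-i},i)$ and then pin down the worst case according to the sign of $\delta=\lceil (w(i)-w(j))/m\rceil$. The only cosmetic difference is that the paper, in the forward direction, splits on $w(i)\ge w(j)$ versus $w(i)<w(j)$ and exhibits the witness elements $2(w(j)+tm)-w(i)$ and $2w(j)-(w(i)-tm)$ directly, whereas you frame the same step as an explicit minimization of $2a-b$ over $a,b\in\mathbb N$ with $a-b\ge\delta$; the resulting extremal pairs $(\delta,0)$ and $(0,-\delta)$ are exactly the paper's witnesses.
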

\begin{proof}
Suppose \(S\) is an Arf numerical semigroup. 
For any \(i,j\in\{0, \ldots, m-1\}\), define 
\(t=\lceil \frac{w(i)-w(j)}{m}\rceil\). Then $w(j)+mt \geq w(i)$.

If $w(i)\ge w(j)$, then $w(j)+mt, w(i)\in S$.  By the Arf property \(2w(j)+2mt - w(i)\in S\). This element can be expressed as 
\[
2w(j) - w(i) + 2tm=w(\overline{2j-i}  ) + (\mathrm h(j,j) - \mathrm h(\overline{2j-i}  ,i) + 2t)m,
\]
and so it belongs to \(S\) if and only if \(\mathrm h(j,j)-\mathrm h(\overline{2j-i}  ,i) + 2t \geq 0\).

Now if $w(i)<w(j)$, then $w(j)\ge w(i)-mt$, and $w(j),w(i)-mt\in S$. By the Arf property we deduce in this case that $2w(j)+tm-w(i)\in S$. Arguing as above we deduce that this occurs if and only if $h(j,j) - h(\overline{2j-i}  ,i) + t \geq 0$.

For the converse, let \(x\geq y \in S\). We can write \(x=w(j)+am, y=w(i)+bm\) for some $i,j\in \{0,\ldots, m-1\}$ and \(a,b\in \mathbb N\). Put again \(t=\lceil\frac{w(i)-w(j)}{m}\rceil\). Then
\[
2x-y = 2w(j)+2am - w(i)-bm=
w(\overline{2j-i}  ) + (\mathrm h(j,j)-\mathrm h(\overline{2j-i}  , i) + 2a-b)m.
\]
By the condition \(x\geq y\), we have $w(j)+am\ge w(i)+bm$, and consequently $a-b\ge t$. 

If $t\ge 0$, as $2a-b\geq 2(a-b) \geq 2t$, we deduce $\mathrm h(j,j)-\mathrm h(\overline{2j-i}  , i) + 2a-b\ge \mathrm h(j,j)-\mathrm h(\overline{2j-i}  , i) + 2t$, which by \eqref{eq:ArfCondToCocycle-g} is nonnegative. This forces $2x-y\in S$.

If $t<0$, then $2a-b=a+a-b\ge a-b\ge t$. Arguing as in the preceding case, \eqref{eq:ArfCondToCocycle-g} ensures that $2x-y\in S$.
\end{proof}

Let us now translate cocycles to the language of Kunz coordinates.

\begin{lemma}\label{CocycleFromKunz}
Let $S$ be a numerical semigroup with multiplicity $m$ and Kunz coordinates $(x_1,\ldots,x_{m-1})$. 
\[
\mathrm h(i,j) = x_i+x_j-x_{\overline{i+j}}+\left\lfloor \frac{i+j}m\right\rfloor.
\]
\end{lemma}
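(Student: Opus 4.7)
The plan is a direct unpacking of the definitions: the cocycle $\mathrm h(i,j)$ is written purely in terms of $w(i), w(j), w(\overline{i+j})$, and each of these Apéry elements is expressible via the Kunz coordinates by $w(k)=x_km+k$ (with the convention $x_0=0$, so $w(0)=0$). So the whole statement should reduce to an identity about integer division.

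Concretely, I would first substitute $w(i)=x_im+i$ and $w(j)=x_jm+j$ into the numerator of the definition of $\mathrm h(i,j)$, obtaining
\[
w(i)+w(j)-w(\overline{i+j}) = (x_i+x_j-x_{\overline{i+j}})m + \bigl((i+j)-\overline{i+j}\bigr).
\]
Then I would invoke the basic identity $(i+j)-\overline{i+j}=m\left\lfloor \tfrac{i+j}{m}\right\rfloor$, which is just the Euclidean division of $i+j$ by $m$ together with the definition of $\overline{i+j}$ as the remainder. Dividing the whole equation by $m$ yields exactly
\[
\mathrm h(i,j) = x_i+x_j-x_{\overline{i+j}}+\left\lfloor \frac{i+j}{m}\right\rfloor,
\]
as desired.

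The only point worth a brief comment is the boundary case $i=0$ or $j=0$, which is covered by the convention $x_0=0$, $w(0)=0$ mentioned just before the Kunz coordinates were defined; in that case $\overline{i+j}=i+j$, $\lfloor (i+j)/m\rfloor=0$, and both sides are zero. There is no genuine obstacle here — the lemma is essentially a bookkeeping identity and the proof is a one-line computation once $w(k)=x_km+k$ is substituted.
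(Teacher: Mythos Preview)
Your proof is correct and follows essentially the same route as the paper: substitute $w(k)=x_km+k$ into the definition of $\mathrm h(i,j)$ and use $(i+j)-\overline{i+j}=m\lfloor (i+j)/m\rfloor$. The extra remark on the boundary case $i=0$ or $j=0$ is a harmless addition not present in the paper's version.
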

\begin{proof}
By definition $\mathrm h(i,j)=(w(i)+w(j)-w(\overline{i+j}))/m$. This can be expressed in terms of Kunz coordinates as $\mathrm h(i,j)=(x_im+i+x_jm+j-x_{\overline{i+j}}m-\overline{i+j})/m= x_i+x_j-x_{\overline{i+j}}+(i+j-\overline{i+j})/m$. The proof now follows from the equality $(i+j-\overline{i+j})/m=\lfloor (i+j)/m\rfloor$.
\end{proof}

Notice also that 
\begin{equation}
\label{eq:diff-ap}
\lceil (w(i)-w(j))/m\rceil=x_i-x_j+\lceil (i-j)/m\rceil
\end{equation}
With this and Lemma~\ref{CocycleFromKunz} we can translate Lemma \ref{ArfCocycleToCocycle} to Kunz coordinates.

\begin{proposition}
Let $S$ be sa numerical semigroup with multiplicity $m$ and Kunz coordinates $(x_1,\ldots, x_{m-1})$. Then $S$ has the Arf property if and only if for any \(i,j\in \{0,\ldots, m-1\}\), 

\begin{enumerate}[(i)]
\item if $x_i+\lceil (i-j)/m\rceil\ge x_j$, 
\begin{equation}\label{eq:from-cocycle-cond-g}
  x_i - x_{\overline{2j-i}} + 2\left\lceil\frac{i -j}{m}\right\rceil + \left\lfloor \frac{2j}m\right\rfloor -\left\lfloor \frac{\overline{2j-i}+i}m\right\rfloor \geq 0;
\end{equation}
\item if $x_i+\lceil (i-j)/m\rceil< x_j$,
\begin{equation}\label{eq:from-cocycle-cond-l}
  x_j - x_{\overline{2j-i}} + \left\lceil\frac{i -j}{m}\right\rceil + \left\lfloor \frac{2j}m\right\rfloor -\left\lfloor \frac{\overline{2j-i}+i}m\right\rfloor \geq 0.
\end{equation}
\end{enumerate} 
\end{proposition}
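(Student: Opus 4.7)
The plan is to simply substitute the formulas from Lemma~\ref{CocycleFromKunz} and equation~\eqref{eq:diff-ap} into the two conditions of Lemma~\ref{ArfCocycleToCocycle} and simplify. There is nothing conceptual here; the whole task is bookkeeping with floors, ceilings, and residues.

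First I would rewrite the two cocycles that appear in Lemma~\ref{ArfCocycleToCocycle} using Lemma~\ref{CocycleFromKunz}:
\begin{align*}
\mathrm h(j,j) &= 2x_j - x_{\overline{2j}} + \lfloor 2j/m\rfloor,\\
\mathrm h(\overline{2j-i},i) &= x_{\overline{2j-i}} + x_i - x_{\overline{\overline{2j-i}+i}} + \lfloor (\overline{2j-i}+i)/m\rfloor.
\end{align*}
The key observation is that $\overline{2j-i}+i \equiv 2j \pmod m$, so $x_{\overline{\overline{2j-i}+i}} = x_{\overline{2j}}$, and therefore
\[
\mathrm h(j,j)-\mathrm h(\overline{2j-i},i) = 2x_j - x_i - x_{\overline{2j-i}} + \lfloor 2j/m\rfloor - \lfloor (\overline{2j-i}+i)/m\rfloor.
\]
The cancellation of $x_{\overline{2j}}$ is the only step that requires a moment's thought, and I would record it explicitly.

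Next, using \eqref{eq:diff-ap}, the sign condition $\lceil (w(i)-w(j))/m\rceil \ge 0$ of Lemma~\ref{ArfCocycleToCocycle}(i) becomes exactly the hypothesis $x_i + \lceil(i-j)/m\rceil \ge x_j$ of (i), and similarly for the strict inequality in case (ii). Substituting $\lceil (w(i)-w(j))/m\rceil = x_i - x_j + \lceil(i-j)/m\rceil$ into \eqref{eq:ArfCondToCocycle-g}, the terms $2x_j$ and $-2x_j$ cancel, giving
\[
x_i - x_{\overline{2j-i}} + 2\lceil(i-j)/m\rceil + \lfloor 2j/m\rfloor - \lfloor (\overline{2j-i}+i)/m\rfloor \ge 0,
\]
which is precisely \eqref{eq:from-cocycle-cond-g}. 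Substituting the same expression (without the factor $2$) into \eqref{eq:ArfCondToCocycle-l} yields \eqref{eq:from-cocycle-cond-l} after cancelling only one copy of $x_j$ and $x_i$.

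The only potential obstacle is keeping the case split consistent: one must check that the equivalences between ``$\lceil(w(i)-w(j))/m\rceil \ge 0$'' and ``$x_i+\lceil(i-j)/m\rceil \ge x_j$'' and the corresponding strict versions match up exactly (including how to handle $i=j$ and boundary cases where $i-j$ is an integer multiple of $m$, which only happens when $i=j$ since $0 \le i,j \le m-1$). Beyond that, everything else is algebra, and the proof reduces to the two lines of substitution above in each case.
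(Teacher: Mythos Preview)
Your proposal is correct and follows essentially the same route as the paper: substitute Lemma~\ref{CocycleFromKunz} and \eqref{eq:diff-ap} into the two cases of Lemma~\ref{ArfCocycleToCocycle}, observe that $\overline{\overline{2j-i}+i}=\overline{2j}$ so the $x_{\overline{2j}}$ terms cancel, and simplify. The paper's proof is slightly terser, carrying out the whole substitution in one display for each case rather than first isolating $\mathrm h(j,j)-\mathrm h(\overline{2j-i},i)$, but the content is identical.
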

\begin{proof}
From Lemma \ref{CocycleFromKunz} and  \eqref{eq:diff-ap} we obtain
\begin{align*}
\mathrm h(j,j)-\mathrm h(\overline{2j-i},i)+2\left\lceil \frac{w(i)-w(j)}m\right\rceil =\ & 2x_j-x_{\overline{2j}}+\lfloor 2j/m\rfloor\\
&  - x_{\overline{2j-i}}-x_i+x_{\overline{2j}}-\lfloor (\overline{2j-i}+i)/m\rfloor \\ 
& + 2(x_i-x_j)+2\lceil (i-j)/m\rceil\\
=\ & x_i-x_{\overline{2j-i}}+ 2\lceil (i-j)/m\rceil \\ & + \lfloor 2j/m\rfloor -\lfloor (\overline{2j-i}+i)/m\rfloor,
\end{align*}
and
\begin{align*}
\mathrm h(j,j)-\mathrm h(\overline{2j-i},i)+\left\lceil \frac{w(i)-w(j)}m\right\rceil =\ & 2x_j-x_{\overline{2j}}+\lfloor 2j/m\rfloor\\
&  - x_{\overline{2j-i}}-x_i+x_{\overline{2j}}-\lfloor (\overline{2j-i}+i)/m\rfloor \\ 
& + (x_i-x_j)+\lceil (i-j)/m\rceil\\
=\ & x_j-x_{\overline{2j-i}}+ \lceil (i-j)/m\rceil \\ & + \lfloor 2j/m\rfloor -\lfloor (\overline{2j-i}+i)/m\rfloor.
\end{align*}
Now we apply Lemma \ref{ArfCocycleToCocycle} and we are done.
\end{proof}

\section{Arf numerical semigroups with low multiplicity}

In this section we focus on the parametrization of Arf numerical semigroups with multiplicity up to six and given conductor.  To this end, we need some preliminary results.

The following well known result will be used to provide upper bounds for the conductor of a numerical semigroup.

\begin{lemma}[\cite{arf-num-sem}] {\label{2:2}}
Let $S$ be an Arf numerical semigroup with conductor $c$, and let $s$ be any element of $S$. If $s+1 \in S$, then  $s+k \in S$ for all $k \in {\mathbb N}$ and thus $c \leq s$.
\end{lemma}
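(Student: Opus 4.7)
The plan is to prove by induction on $k \in \mathbb{N}$ that $s + k \in S$; once this is done, the bound $c \le s$ is immediate, since the conductor is by definition the least integer starting from which every integer lies in $S$, and we will have exhibited $s$ as one such integer.

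The base cases $k = 0$ and $k = 1$ are precisely the hypotheses $s \in S$ and $s + 1 \in S$. For the inductive step, suppose $k \ge 2$ and that $s + k - 2, s + k - 1 \in S$. Then $s + k - 1 \ge s + k - 2$, so the equivalent formulation of the Arf property recalled in Section 1, namely that $2x - y \in S$ whenever $x, y \in S$ with $x \ge y$, applied with $x = s + k - 1$ and $y = s + k - 2$, yields
\[
2(s+k-1) - (s+k-2) = s + k \in S,
\]
closing the induction.

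There is no real obstacle here: the key is simply that once two consecutive elements of $S$ are known, the Arf property forces $S$ to contain the entire arithmetic tail by successively taking $2x - y$ with $x, y$ the two most recently produced consecutive elements. The conclusion $c \le s$ then follows because $c = \mathrm{F}(S) + 1$ is characterized as the smallest nonnegative integer $c$ with $\{c, c+1, c+2, \ldots\} \subseteq S$, and we have shown this inclusion holds with $s$ in place of $c$.
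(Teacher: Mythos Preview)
Your proof is correct. Note that the paper does not actually prove this lemma; it is quoted from \cite{arf-num-sem} without proof, so there is no approach to compare against. Your induction via the $2x-y$ form of the Arf property is the natural argument and works as written.
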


By Selmer's formulas (see for instance \cite[Chapter 1]{ns}), we know that the Frobenius number of $S$ is $\max\mathrm{Ap}(S,m)-m$. 

\begin{lemma} {\label{2:3}}
Let $S$ be an Arf numerical semigroup with multiplicity $m$ and conductor $c$. For each $j \in\{2,3, \ldots , m-1\}$, we have
\begin{enumerate}[(i)]
\item if $w(j-1) < w(j)$, then $c \leq w(j)-1$,
\item if $w(j) < w(j-1)$, then $c \leq w(j-1)$.
\end{enumerate}
\end{lemma}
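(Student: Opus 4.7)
The plan is to reduce both parts to Lemma \ref{2:2}, which says that any consecutive pair $s, s+1 \in S$ forces $c \le s$. So for each case it suffices to exhibit a consecutive pair in $S$, chosen small enough to yield the claimed bound.

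The key elementary fact I will use is that, for a fixed residue $i \in \{0,\dots,m-1\}$, the set of elements of $S$ congruent to $i$ modulo $m$ is exactly the arithmetic progression $\{w(i) + km : k \in \mathbb N\}$. Equivalently, an integer $n$ with $n \equiv i \pmod{m}$ lies in $S$ if and only if $n \ge w(i)$. This follows immediately from the definition of $\mathrm{Ap}(S,m)$ together with the fact that $m \in S$.

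For part (i), I would argue as follows: $w(j)-1 \equiv j-1 \pmod{m}$, and the hypothesis $w(j-1) < w(j)$ gives $w(j)-1 \ge w(j-1)$, so by the fact above $w(j)-1 \in S$. Combined with $w(j) \in S$, Lemma \ref{2:2} applied to $s = w(j)-1$ yields $c \le w(j)-1$. Part (ii) is handled symmetrically: $w(j-1)+1 \equiv j \pmod{m}$ and $w(j) < w(j-1)$ gives $w(j-1)+1 > w(j)$, so $w(j-1)+1 \in S$, and Lemma \ref{2:2} applied to $s = w(j-1)$ gives $c \le w(j-1)$.

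There is no substantive obstacle here; the only care required is the routine congruence-plus-inequality check that places each constructed neighbour in $S$. Note that the Arf hypothesis itself enters only through the invocation of Lemma \ref{2:2}.
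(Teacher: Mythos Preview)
Your proof is correct and follows essentially the same approach as the paper: both arguments reduce to Lemma~\ref{2:2} by exhibiting the consecutive pair $w(j)-1,\,w(j)$ in part~(i) and $w(j-1),\,w(j-1)+1$ in part~(ii). The only cosmetic difference is that the paper verifies membership of the constructed neighbour in $S$ by writing it as an Ap\'ery element plus a nonnegative multiple of $m$, whereas you invoke the equivalent criterion ``$n\equiv i\pmod m$ and $n\ge w(i)$ implies $n\in S$''.
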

\begin{proof}
\emph{(i)} If $w(j-1) < w(j)$, then  $w(j)-w(j-1)-1$ is a nonnegative multiple of $m$ and therefore it is an element of $S$. Thus $w(j-1)+(w(j)-w(j-1)-1)= w(j)-1$ and $w(j)$ are both elements of $S$. Lemma {\ref{2:2}} forces $c \leq w(j)-1$.

\emph{(ii)} If $w(j) < w(j-1)$, then, as above, $w(j-1)-w(j)+1 \in S$. Thus $w(j) + (w(j-1)-w(j)+1) = w(j-1)+1$ and $w(j)$ are both elements of $S$.  So, $c \leq w(j-1)$ by Lemma {\ref{2:2}}.
\end{proof}


This has the corresponding consequence on Kunz coordinates.

\begin{corollary}\label{cor:change}
 Let $S$  be an Arf  numerical semigroup with multiplicity $m$, conductor $c$ and Kunz coordinates $(x_1, \ldots , x_{m-1})$. For every $i\in\{2,\ldots, m-1\}$, 
\begin{enumerate}[(i)]
\item if $x_{i-1} \leq x_i$, then $\frac{c-i+1}{m} \leq x_i$;
\item if $x_i \leq x_{i-1}$, then $\frac{c-i+1}{m} \leq x_{i-1}$.
\end{enumerate}
\end{corollary}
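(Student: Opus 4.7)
The plan is to directly translate Lemma~\ref{2:3} into the language of Kunz coordinates using the identity $w(k) = x_k m + k$. The key observation is that the ordering between $w(i-1)$ and $w(i)$ is nearly equivalent to the ordering between $x_{i-1}$ and $x_i$, up to the ``+1'' shift coming from the indices.

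First I would compute $w(i)-w(i-1) = (x_i - x_{i-1})m + 1$. Since $m \geq 2$, this gives the dictionary:
\begin{itemize}
\item $x_{i-1} \le x_i$ implies $w(i-1) < w(i)$ (the strict inequality holds even when $x_{i-1}=x_i$, thanks to the ``+1'');
\item $x_i < x_{i-1}$ implies $w(i) < w(i-1)$.
\end{itemize}

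Then for case~(i), assuming $x_{i-1}\le x_i$, apply Lemma~\ref{2:3}(i) to conclude $c\le w(i)-1 = x_i m + i - 1$, which rearranges to $(c-i+1)/m \le x_i$. For case~(ii), assuming $x_i \le x_{i-1}$, I split on equality: if $x_i = x_{i-1}$, the argument of case~(i) applies and yields $(c-i+1)/m \le x_i = x_{i-1}$; if $x_i < x_{i-1}$, then $w(i) < w(i-1)$, so Lemma~\ref{2:3}(ii) gives $c \le w(i-1) = x_{i-1}m + i - 1$, which rearranges to $(c-i+1)/m \le x_{i-1}$.

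There is essentially no obstacle here: the statement is a cosmetic reformulation of Lemma~\ref{2:3} once one has the formula $w(k) = x_k m + k$. The only point requiring a moment of care is the borderline case $x_i = x_{i-1}$, where the ``+1'' offset in the definition of the $w(k)$'s forces $w(i-1) < w(i)$, so this case is covered by Lemma~\ref{2:3}(i) rather than (ii); both conclusions of the corollary then coincide and follow simultaneously.
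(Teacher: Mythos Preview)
Your proof is correct and follows essentially the same approach as the paper: translate the hypotheses on $x_{i-1},x_i$ into the ordering of $w(i-1),w(i)$ via $w(k)=x_km+k$, then invoke Lemma~\ref{2:3} and rearrange. You are in fact more careful than the paper in case~(ii): the paper simply says ``similar'', but as you observe, when $x_i=x_{i-1}$ one actually has $w(i-1)<w(i)$ (not $w(i)<w(i-1)$), so the conclusion must come from Lemma~\ref{2:3}(i) rather than (ii).
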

\begin{proof}
If $x_{i-1} \leq x_i$, then $x_{i-1}m+i-1 \leq x_im+i-1$, whence  $w(i-1) < w(i).$  Therefore, $c \leq w(i)-1$ by Lemma \ref{2:3}. Hence 
$c \leq x_im+i-1$ and $\frac{c-i+1}{m} \leq x_i$. The proof of \emph{(ii)} is similar.  
\end{proof}

Let $S$ be a numerical semigroup with multiplicity $m$ and conductor $c$. As every nonnegative multiple of $m$ is in $S$ and $c-1\not\in S$, it follows that $c\not\equiv 1\!\pmod m$. 

The following lemma shows that $w(1)$ and $w(m-1)$ are completely determined by the conductor and $m$ in an Arf numerical semigroup.

\begin{lemma} {\label{2:4}}
Let $S$ be an Arf numerical semigroup  with multiplicity $m$ and conductor $c$.
\begin{enumerate}[(i)]
\item $w(1)= \begin{cases}

 c+1 & {\mbox{if }} c \equiv 0\!\pmod m,\\
      c-\overline{c}+m+1 &  \hbox{otherwise}.
                 \end{cases}$

\item $w(m-1) = c-\overline{c}+m-1.$
\end{enumerate}
\end{lemma}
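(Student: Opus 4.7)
The plan is to use Lemma~\ref{2:2} together with the elementary observation that every multiple of $m$ belongs to $S$ (since $m\in S$). Write $c=qm+\overline c$; the paragraph preceding the lemma already records that $\overline c\neq 1$, so $\overline c\in\{0\}\cup\{2,\ldots,m-1\}$, which matches the two cases appearing in the statement.

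For part (i), observe that $w(1)$ is by definition the least $x\in S$ of the form $km+1$ with $k\ge 0$. If such an element lies in $S$, then $km$ and $km+1$ are two consecutive members of $S$, so Lemma~\ref{2:2} applied with $s=km$ forces $c\le km$, i.e.\ $k\ge \lceil c/m\rceil$. When $\overline c=0$ this gives $k\ge q$, and the candidate $qm+1=c+1>c$ is automatically in $S$; when $\overline c\in\{2,\ldots,m-1\}$ it gives $k\ge q+1$, and $(q+1)m+1=c-\overline c+m+1$ exceeds $c$ so again lies in $S$. In each case the minimal admissible $k$ produces precisely the claimed value of $w(1)$.

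For part (ii), $w(m-1)$ is the least $x\in S$ of the form $km-1$ with $k\ge 1$. Now if $km-1\in S$, then $km-1$ and $km$ are consecutive elements of $S$, so Lemma~\ref{2:2} applied with $s=km-1$ forces $c\le km-1$, hence $k\ge \lceil (c+1)/m\rceil$. A direct computation shows $\lceil (c+1)/m\rceil=q+1$ in both cases $\overline c=0$ and $\overline c\in\{2,\ldots,m-1\}$ (using $0\le\overline c\le m-1$). Thus the minimal admissible $k$ is $q+1$, and $(q+1)m-1=qm+m-1\ge qm+\overline c=c$, so this element really is in $S$. Rewriting $(q+1)m-1=c-\overline c+m-1$ yields the stated formula.

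No step here is a real obstacle; the only care needed is the small case split on $\overline c$ in part (i) and the verification that each proposed value of $w$ actually lies in $S$. Lemma~\ref{2:2} does all the work of excluding smaller candidates.
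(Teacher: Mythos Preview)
Your argument is correct and follows essentially the same route as the paper: both proofs exploit that multiples of $m$ lie in $S$ and apply Lemma~\ref{2:2} to the consecutive pair $(km,km+1)$ for part~(i) and to $(km-1,km)$ for part~(ii), then identify the least admissible $k$ and check the resulting element is at least $c$. The only cosmetic difference is that you phrase the bound via $\lceil c/m\rceil$ and $\lceil (c+1)/m\rceil$, whereas the paper writes out the two cases $\overline c=0$ and $\overline c\neq 0$ separately from the start.
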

\begin{proof}
We know that $\overline{c} \in \{0, 2, \ldots , m-1\}$.

Let us first consider the case  $\overline{c}=0$. Since $ms \in S$ for all $s \in {\mathbb{N}}$, $ms + 1 \not \in S$ and $ms + m - 1 \not \in S$ for $s < \frac{c}{m}$ by  Lemma {\ref{2:2}}. Hence $w(1) = m \cdot \frac{c}{m} + 1 = c+1$ and $w(m-1) = m \cdot \frac{c}{m} + m - 1=c+m-1$. This proves \emph{(i)} and \emph{(ii)} when $\overline{c}=0$. 

It remains to prove \emph{(i)} and \emph{(ii)} for the case  $\overline{c} \neq 0$.  Again since  $ms \in S$ for all $s \in {\mathbb{N}}$, Lemma {\ref{2:2}} implies that $ms +  1 \not \in S$ for $s \leq \frac{c-\overline{c}}{m}$ and $ms + m - 1 \not \in S$ for $s < \frac{c-\overline{c}}{m}$. Therefore,
$w(1) = m \cdot (\frac{c-\overline{c}}{m}+1) + 1 = c-\overline{c}+m+1$ and $w(m-1) = m \cdot \frac{c-\overline{c}}{m} + m - 1= c-\overline{c}+m - 1$ when $\overline{c} \neq 0$. This completes the proof.
\end{proof}

Let us translate Lemma \ref{2:4} to the language of Kunz coordinates. \begin{corollary}\label{cor:eq-c}
Let $S$ be an Arf numerical semigroup with multiplicity $m$ and conductor $c$. Then,
\begin{equation}\label{eq:eq-c}
x_1=\left\lceil \frac{c}m\right\rceil,\ x_{m-1}=\left\lfloor \frac{c}m\right\rfloor.
\end{equation}
\end{corollary}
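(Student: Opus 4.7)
The plan is to observe that Corollary~\ref{cor:eq-c} is an immediate unpacking of Lemma~\ref{2:4}: once the explicit values of $w(1)$ and $w(m-1)$ in terms of $c$, $m$, and $\overline{c}$ are in hand, one only has to divide by $m$ to extract the corresponding Kunz coordinate, since by definition $w(k) = x_k m + k$, hence $x_k = (w(k)-k)/m$.

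For $x_1$, I would split on whether $c\equiv 0 \pmod m$ or not. In the first case Lemma~\ref{2:4}(i) gives $w(1)=c+1$, so $x_1 = c/m$, which equals $\lceil c/m\rceil$ because $m\mid c$. In the second case the same lemma gives $w(1) = c-\overline{c}+m+1$, hence $x_1 = (c-\overline{c})/m + 1 = \lfloor c/m\rfloor + 1$; since $\overline{c}\neq 0$ by hypothesis (and by the observation preceding Lemma~\ref{2:4} that $c \not\equiv 1 \pmod m$ still leaves $\overline{c}\in\{0,2,\ldots,m-1\}$), this is exactly $\lceil c/m\rceil$. The formula for $x_{m-1}$ is handled in the same way: from Lemma~\ref{2:4}(ii), $w(m-1) = c-\overline{c}+m-1$ in both cases, so $x_{m-1}=(c-\overline{c})/m = \lfloor c/m\rfloor$.

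There is no real obstacle; the only point to be careful about is the identity $\lceil c/m\rceil = \lfloor c/m\rfloor + 1$ when $m\nmid c$, which is what makes the two cases of Lemma~\ref{2:4}(i) collapse into the single ceiling expression. I would therefore present the proof as a one-paragraph computation directly substituting the values from Lemma~\ref{2:4} into $x_k = (w(k)-k)/m$ and noting this identity.
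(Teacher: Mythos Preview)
Your proposal is correct and follows exactly the same approach as the paper: both proofs split on whether $\overline{c}=0$, substitute the values of $w(1)$ and $w(m-1)$ from Lemma~\ref{2:4} into the defining relation $w(k)=x_k m+k$, and read off $x_1$ and $x_{m-1}$. The only difference is that you make explicit the identification $(c-\overline{c})/m=\lfloor c/m\rfloor$ and $\lfloor c/m\rfloor+1=\lceil c/m\rceil$ when $m\nmid c$, which the paper leaves implicit.
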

\begin{proof}
If $\overline{c}=0$, then we know that $w(1)=x_1m+1=c+1$, whence $x_1=c/m$. Also, $w(m-1)=x_{m-1}m+m-1=c+m-1$. Hence $x_{m-1}=c/m$.

If $\overline{c}\neq 0$, then $w(1)=x_1m+1=c-\overline{c}+m+1$. Thus, $x_1=(c-\overline{c})/m+1$. In this setting, $w(m-1)=x_{m-1}m+m-1=c-\overline{c}+m-1$. Hence $x_{m-1}=(c-\overline{c})/m$.
\end{proof}

\begin{lemma} {\label{2:5}}
Let $S$ be an Arf numerical semigroup  with multiplicity $m >2$.
For any integer $k$ with $0 < k < \frac{m}{2}$, we have
\[w(2k) \leq w(k)+k \ {\mbox{and}} \  w(m-2k) \leq w(m-k) + m-k.\]
\end{lemma}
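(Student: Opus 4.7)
The plan is to apply the alternative characterization of the Arf property recalled in the notation section, namely $2x-y\in S$ whenever $x,y\in S$ with $x\ge y$, to a cleverly chosen pair drawn from the Ap\'ery decomposition $w(j)=x_jm+j$. The key observation that unlocks both inequalities is that for every $j\in\{1,\ldots,m-1\}$ the integer $x_jm$ itself lies in $S$, because $m\in S$ and $x_j\ge 1$ (since $0<j<m$ forces $j\notin S$, which pushes $w(j)\ge m+j$).

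For the first inequality I would take $x=w(k)$ and $y=x_km$. Both lie in $S$ and $x=x_km+k\ge x_km=y$, so the Arf property yields $2x-y=x_km+2k\in S$. The hypothesis $0<k<m/2$ gives $0<2k<m$, hence $x_km+2k$ is congruent to $2k$ modulo $m$; invoking the minimality built into the definition of $w$ in \eqref{eq:ap} we conclude
\[
w(2k)\le x_km+2k=w(k)+k.
\]
The second inequality would follow by running the same recipe with $k$ replaced by $m-k$. Taking $x=w(m-k)$ and $y=x_{m-k}m$, the Arf property produces $2x-y=x_{m-k}m+2(m-k)\in S$; since $0<k<m/2$ forces $m<2(m-k)<2m$, this element has residue $m-2k$ modulo $m$, and therefore
\[
w(m-2k)\le x_{m-k}m+2(m-k)=w(m-k)+(m-k).
\]

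There is no real obstacle here: the whole argument is a one-line invocation of the Arf property applied to a tailored pair. The only point that needs verifying is the residue class of $2x-y$ modulo $m$, and the constraint $0<k<m/2$ makes this transparent, guaranteeing $\overline{2k}=2k$ in the first case and $\overline{2(m-k)}=m-2k$ in the second, so that the bounds on $w(2k)$ and $w(m-2k)$ are exactly those claimed.
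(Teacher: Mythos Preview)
Your argument is correct and is essentially the paper's own proof: the pair $(x,y)=(w(k),x_km)$ you use is exactly the pair $(w(k),w(k)-k)$ the paper uses, just written in Kunz-coordinate notation, and the Arf identity $2x-y=w(k)+k$ together with the residue check is identical in both cases (and likewise for $m-k$). The only cosmetic difference is that you justify $x_km\in S$ via $x_k\ge 1$, whereas the paper simply notes that $w(k)-k$ is a nonnegative multiple of $m$.
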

\begin{proof}
Let $m>2$ and let $0 < k < \frac{m}{2}$. Note that $w(k)-k$ is a (non negative) multiple of $m$ and thus it is an element of $S$. Therefore  $2w(k)-(w(k)-k)= w(k) + k \in S$ by the Arf property. This implies $w(2k) \leq w(k)+k$ since $w(k)+k \equiv 2k\!\pmod m$. Similarly, $2w(m-k)-(w(m-k)-(m-k))=w(m-k)+(m-k) \in S$ which implies $w(m-2k) \leq w(m-k)+(m-k)$.
\end{proof}

%

As a consequence of Lemma \ref{2:5}, in the Arf setting, we can add more inequalities to the above system of inequalities.

\begin{corollary}\label{cor:more-eq}
Let $S$ be an Arf numerical semigroup with Kunz coordinates $(x_1,\ldots, x_n)$. Then for every integer $k$ with $0<k<\frac{m}2$,
\begin{equation}\label{eq:more-eq}
x_{2k}\le x_k \hbox{ and } x_{m-2k}\le x_{m-k}+1.
\end{equation}
\end{corollary}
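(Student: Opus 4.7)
The proof is essentially a direct translation of Lemma \ref{2:5} to the language of Kunz coordinates, using that $w(i)=x_i m+i$ for all $i\in\{1,\ldots,m-1\}$. First I would check that the indices appearing in the statement are legitimate: since $0<k<m/2$ we have $1\le k<m-k$ and $1\le 2k\le m-1$, so $x_k$, $x_{2k}$, $x_{m-k}$ and $x_{m-2k}$ are all genuine Kunz coordinates.

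For the first inequality, I would rewrite the bound $w(2k)\le w(k)+k$ provided by Lemma \ref{2:5} as
\[
x_{2k}\, m + 2k \le x_k\, m + k + k = x_k\, m + 2k,
\]
which collapses to $x_{2k}\, m\le x_k\, m$, hence $x_{2k}\le x_k$.

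For the second inequality, I would apply the same translation to $w(m-2k)\le w(m-k)+(m-k)$, which becomes
\[
x_{m-2k}\, m + (m-2k) \le x_{m-k}\, m + (m-k) + (m-k) = x_{m-k}\, m + 2(m-k).
\]
Subtracting $(m-2k)$ from both sides yields $x_{m-2k}\, m \le x_{m-k}\, m + m$, so $x_{m-2k}\le x_{m-k}+1$, as required.

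There is no real obstacle here; the only thing to be mindful of is the off-by-one effect coming from the fact that, in the second case, the residues on the two sides differ by an amount that is an integer multiple of $m$ only after absorbing a $+1$ into the Kunz coordinate — this is precisely what produces the asymmetric $+1$ in $x_{m-2k}\le x_{m-k}+1$ as compared with the first inequality.
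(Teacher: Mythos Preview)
Your proof is correct and follows exactly the same approach as the paper: both simply substitute $w(i)=x_i m+i$ into the two inequalities of Lemma~\ref{2:5} and simplify. Your extra index check and the remark on the source of the $+1$ are helpful elaborations, but the argument is otherwise identical.
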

\begin{proof}
Notice that $w(2k)=x_{2k}m+2k\le w(k)+k=(x_km+k)+k$, and so $x_{2k}\le x_k$.

For the other inequality, observe that $w(m-2k)=x_{m-2k}m+m-2k\le w(m-k)+m-k=(x_{m-k}m+m-k)+m-k= (x_{m-k}+1)m+m-2k$, and consequently $x_{m-2k}\le x_{m-k}+1$.
\end{proof}


\subsection{Arf numerical semigroups with multiplicity one}
The only numerical semigroup with multiplicity one is $\mathbb N$, which is trivially Arf.
 
\subsection{Arf numerical semigroups with multiplicity two}
Numerical semigroups with multiplicity $2$ are completely determined by their conductor. In fact, if $S$ is a numerical semigroup with multiplicity $2$ and conductor $c$, then $c$ is an even number and $S = \langle  2, c+1 \rangle$. It is easily seen by directly applying the Arf pattern that every numerical semigroup with multiplicity $2$ is an Arf numerical semigroup.

\subsection{Arf numerical semigroups with multiplicity three}
Numerical semigroups with multiplicity $3$ or more are not completely determined by their conductor alone. The genus is needed to completely determine them ~\cite{tres-cuatro}. In that paper, formulas for the number of numerical semigroups with multiplicity $3$ having a prescribed Frobenius number or genus are given.

As we see next, if the Arf property is assumed, then the semigroup is fully determined by the multiplicity and the conductor.

\begin{proposition}
Let $c$ be an integer such that $c \geq 3$ and $c \not \equiv 1\!\pmod  3$.
Then there is exactly one Arf numerical semigroup $S$ with multiplicity $3$ and conductor $c$ given by
\begin{enumerate}[(i)]
\item $S=\langle  3, c+1, c+2 \rangle$ if $c \equiv 0\!\pmod 3$,
\item $S=\langle  3, c, c+2 \rangle$ if  $c \equiv 2\!\pmod 3$.
\end{enumerate}
\end{proposition}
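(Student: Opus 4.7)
The plan is to apply Corollary~\ref{cor:eq-c} to pin down the Kunz coordinates of any such $S$ from $m=3$ and $c$ alone, which gives uniqueness immediately, and then to verify existence by exhibiting an explicit Arf sequence in each congruence class. With $m=3$ the only Kunz coordinates are $x_1$ and $x_2$, and Corollary~\ref{cor:eq-c} forces $x_1=\lceil c/3\rceil$ and $x_2=\lfloor c/3\rfloor$; the hypothesis $c\not\equiv 1\!\pmod 3$ is precisely the congruence condition for $c$ to be admissible as a conductor of a numerical semigroup of multiplicity $3$. In particular, $S$ is uniquely determined by $c$.

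Translating back via $w(i)=x_im+i$: in case (i), $c\equiv 0\!\pmod 3$ gives $w(1)=c+1$ and $w(2)=c+2$; in case (ii), $c\equiv 2\!\pmod 3$ gives $w(1)=c+2$ and $w(2)=c$. Since an Arf numerical semigroup has maximal embedding dimension, its minimal system of generators is $\{m\}\cup(\mathrm{Ap}(S,m)\setminus\{0\})$, which yields $S=\langle 3,c+1,c+2\rangle$ in case (i) and $S=\langle 3,c,c+2\rangle$ in case (ii).

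For existence I would invoke Proposition~\ref{seq-ns} with the Arf sequence $(3,3,\ldots,3)$ of length $c/3$ in case (i), and $(2,3,3,\ldots,3)$ with $(c-2)/3$ trailing $3$s in case (ii). Both sequences are nondecreasing with first term at least $2$, and every successor either equals its predecessor (and hence lies in $\mathrm S(x_1,\ldots,x_i)^\ast$ trivially) or is the single jump from $2$ to $3\in\mathrm S(2)^\ast=\{2,3,\to\}$. Since each sum equals $c$, Proposition~\ref{genus-frob-seq}(i) shows that the conductor is $c$, and a direct unwinding identifies the associated semigroups as the ones listed above. The only obstacle is this short bookkeeping; the real leverage comes from Corollary~\ref{cor:eq-c}, which collapses the question to a determination of two coordinates.
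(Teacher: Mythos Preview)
Your argument is correct and follows essentially the same route as the paper: both pin down $w(1)$ and $w(m-1)$ from $c$ alone (the paper via Lemma~\ref{2:4}, you via its Kunz-coordinate restatement Corollary~\ref{cor:eq-c}), and with $m=3$ this already determines the full Ap\'ery set and hence $S$. Your treatment is slightly more complete in that you supply explicit Arf sequences to witness existence, whereas the paper's proof handles only the uniqueness direction and leaves existence implicit.
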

\begin{proof}
\emph{(i)} If  $c \equiv 0\!\pmod 3$, then $w(1)=c + 1$ and $w(2)=c + 2$ by Lemma {\ref{2:4}}. Thus $S=\langle  3, c+1, c+2 \rangle$.

\emph{(ii)} If $c \equiv 2\!\pmod 3$, then $w(1)=c+2$ and $w(2)=c $ by Lemma {\ref{2:4}}. Hence $S=\langle  3, c, c+2 \rangle$.
\end{proof}

%
%
%
%

\begin{example}
The only Arf numerical semigroup with multiplicity $3$ and Frobenius number $10$ (conductor 11) is
$\langle  3,11,13 \rangle=\{0,3,6,9,11, \rightarrow\}.$ The only Arf numerical semigroup with multiplicity $3$ and Frobenius number $11$ (conductor 12) is
$\langle  3,13,14 \rangle=\{0,3,6,9,12, \rightarrow \}.$
\end{example} 

\subsection{Arf numerical semigroups with multiplicity four}
In ~\cite{tres-cuatro}, it is shown that  numerical semigroups with multiplicity $4$ are completely determined by their genus, Frobenius number and ratio (the least minimal generator greater than the multiplicity). Formulas for the number of numerical semigroups with multiplicity $4$ and given genus and/or Frobenius number are also presented in that paper. Of course all those formulas can be expressed by using the conductor instead of the Frobenius number. Also in \cite{gen-fun} formulas for the number of numerical semigroups with multiplicity 4 and fixed Frobenius number are given; these are obtained by means of short generating functions (also if we fix the genus and the Frobenius number).

Let $S$ be an Arf numerical semigroup with multiplicity $4$ and conductor $c$. Then $c  \equiv 0, 2$ or $3 \!\pmod 4$.  The following proposition describes all Arf numerical semigroups with multiplicity $4$ and conductor $c$.

\begin{proposition} {\label{4:1}}
Let $S$ be an Arf numerical semigroup with multiplicity $4$ and conductor $c$. 
\begin{enumerate}[(i)]
\item If  $c \equiv 0\!\pmod 4$, then $S=\langle  4, 4t+2, c+1, c+3 \rangle$ for some $t\in\{1, \ldots , \frac{c}{4}\}$.
\item If  $c \equiv 2\!\pmod 4$, then $S=\langle  4, 4t+2, c+1, c+3 \rangle$ for some $t\in\{1, \ldots , \frac{c-2}{4}\}$.
\item If  $c \equiv  3\!\pmod 4$, then $S=\langle  4, c, c+2, c+3 \rangle.$
\end{enumerate}
\end{proposition}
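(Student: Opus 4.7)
The plan is to exploit the maximal embedding dimension property of Arf semigroups to write the minimal generating system of $S$ as $\{4, w(1), w(2), w(3)\}$, where $w(i)$ denotes the smallest element of $S$ congruent to $i\pmod 4$. Lemma \ref{2:4} already pins down $w(1)$ and $w(3)$ in terms of $c\bmod 4$: in case (i) $(w(1),w(3)) = (c+1,c+3)$, in case (ii) $(w(1),w(3))=(c+3,c+1)$, and in case (iii) $(w(1),w(3)) = (c+2,c)$. The only remaining unknown is $w(2)$, and since $w(2)\equiv 2\pmod 4$ and $w(2)\ne 2$ (the multiplicity being $4$), one can write $w(2) = 4t+2$ with $t\ge 1$; the whole problem then reduces to constraining the range of $t$.

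The upper bound on $t$ in each case arises from the trivial observation that every integer $\ge c$ lies in $S$, so $w(2)$ is at most the smallest such integer that is congruent to $2\pmod 4$: in case (i) this smallest integer is $c+2$, forcing $t\le c/4$; in case (ii) it is $c$ itself (since $c\equiv 2\pmod 4$), giving $t\le (c-2)/4$; in case (iii) it is $c+3$, giving $t\le (c+1)/4$. Together with the obvious $t\ge 1$, this already yields the claimed ranges in cases (i) and (ii).

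Case (iii) requires the extra step of showing that $w(2)$ is \emph{uniquely} determined, i.e., that $w(2)\ge c+3$ as well. Suppose for contradiction $4t+2 = w(2) \le c-1$. Since $4\in S$, every element $4t+2+4r$ with $r\in\mathbb N$ also lies in $S$; taking $r = (c-3)/4 - t$ (a nonnegative integer, because $c\equiv 3\pmod 4$ makes $(c-3)/4$ a nonnegative integer and the assumption $4t+2\le c-1$ forces $t\le (c-3)/4$), we obtain $c-1 \in S$, contradicting the fact that $c$ is the conductor. Hence $w(2)\ge c$, and since $w(2)\equiv 2\not\equiv 3\equiv c\pmod 4$, in fact $w(2)\ge c+3$; combined with the upper bound this forces $w(2) = c+3$. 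Substituting the values of $w(1), w(2), w(3)$ into $\{4, w(1), w(2), w(3)\}$ now gives the three generating sets claimed. The main technical hurdle is precisely this propagation argument for case (iii), which turns a hypothetical small $w(2)$ into a forbidden element $c-1$ of $S$; the rest of the proof is essentially a bookkeeping application of Lemma \ref{2:4}.
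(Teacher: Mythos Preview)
Your proof is correct and follows the same overall structure as the paper's: use Lemma~\ref{2:4} to pin down $w(1)$ and $w(3)$, then constrain $w(2)$. The only real difference is in how the bound on $w(2)$ is obtained. The paper invokes Selmer's formula, which says $\max\mathrm{Ap}(S,4)=\mathrm F(S)+4=c+3$; in cases (i) and (ii) this maximum is already accounted for by $w(3)$ or $w(1)$, so $w(2)<c+3$ gives the range for $t$ immediately, while in case (iii) one has $c+3\equiv 2\pmod 4$, so $w(2)=c+3$ is forced in one line with no contradiction argument needed. Your conductor-based bound and the propagation argument for (iii) reach the same conclusions by slightly more elementary means, at the cost of a little extra work; either route is fine. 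Like the paper, you omit a verification that every listed semigroup is actually Arf with conductor $c$, which the paper simply asserts (``We first note that all the semigroups given in the proposition are Arf numerical semigroups'').
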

\begin{proof}
We first note that all the semigroups given in the proposition are Arf numerical semigroups.

Let $S$ be an Arf numerical semigroup with multiplicity $4$ and conductor $c$. As we have already noted,  $c \equiv k\!\pmod m$ where $k \in \{0,2,3\}$. We have $w(3)=c-k+3$ and
\[ w(1)=\begin{cases}
c+1 & {\mbox{if }} k = 0,\\
c-k+5 & {\mbox{if }} k \neq 0,
\end{cases}\]
by Lemma {\ref{2:4}}.

\emph{(i)} If $c \equiv 0\!\pmod 4$, then $w(1)=c+1$ and $w(3)=c+3$, which by Selmer's formulas is the largest element of $\mathrm{Ap}(S,4)$. Since $w(2) < c+3$, we conclude that   $w(2)=4t+2$ with $1 \leq t \leq \frac{c}{4}$.  Thus we have
$S=\langle  4, 4t+2, c+1, c+3 \rangle, 1 \leq t \leq \frac{c}{4}$.

\emph{(ii)} If $c \equiv 2\!\pmod 4$, then $ w(3)=c+1$ and $w(1)=c+3$. In this setting, $w(1)=c+3$ is the largest element of $\mathrm{Ap}(S,4)$. Therefore $w(2) < c+3$, which implies that $w(2)=4t+2$ with $1 \leq t \leq \frac{c-2}{4}$. Thus we have $S=\langle  4, 4t+2, c+1, c+3 \rangle$, for some integer $t$ with $1 \leq t \leq \frac{c-2}{4}$.

\emph{(iii)} If $c \equiv  3\!\pmod 4$, then $ w(3)=c$ and $w(1)=c + 2$. In this case $c + 3=w(2)$ is the largest element of $\mathrm{Ap}(S,4)$. Thus $S=\langle  4, c, c+2, c+3 \rangle.$
\end{proof}

Proposition {\ref{4:1}} can be used to count Arf numerical semigroups with multiplicity $4$ and conductor $c$. Compare this result with the formula obtained for maximal embedding dimension numerical semigroups with fixed Frobenius number and genus, and multiplicity 4 given in \cite{gen-fun}.

Let $\mathrm n_A(c,m)$ denote the number of  Arf numerical semigroups with multiplicity $m$ and conductor $c$.

\begin{corollary}
Let $c$ be an integer such that $c \geq 4$ and  $c \not \equiv 1\!\pmod 4$. Then
\[\mathrm n_A(c,4)=\begin{cases}
          \frac{c}{4} & {\mbox{if }}  c \equiv 0\!\pmod 4,\\
          \frac{c-2}{4} & {\mbox{if }}  c \equiv 2\!\pmod 4,\\
           1 & {\mbox{if }}  c \equiv 3\!\pmod 4.
          \end{cases}\]
 \end{corollary}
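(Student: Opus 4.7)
The plan is to read the corollary off as a counting consequence of Proposition \ref{4:1}. That proposition shows that every Arf numerical semigroup $S$ with multiplicity $4$ and conductor $c$ belongs to exactly one of three parametric families, indexed by the residue of $c$ modulo $4$ (the residue $1$ being impossible since every multiple of $4$ is in $S$ while $c-1\notin S$). Moreover, as noted at the start of the proof of Proposition \ref{4:1}, every semigroup written in the stated form is indeed Arf with multiplicity $4$ and conductor $c$. Consequently $\mathrm{n}_A(c,4)$ is exactly the number of admissible parameters in the relevant family, so the proof amounts to counting these parameters and verifying that distinct parameters yield distinct semigroups.

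For the case $c\equiv 3\pmod 4$ there is nothing to do: Proposition \ref{4:1}(iii) exhibits a single semigroup $\langle 4,c,c+2,c+3\rangle$, giving $\mathrm{n}_A(c,4)=1$.

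For the two remaining cases $c\equiv 0,2\pmod 4$, the family has the shape $\langle 4,4t+2,c+1,c+3\rangle$ with $t$ in the prescribed interval. The key observation is that in the Apéry set $\mathrm{Ap}(S,4)=\{0,w(1),w(2),w(3)\}$ the entry $w(2)$ is intrinsically determined by $S$ (it is the least element of $S$ congruent to $2$ modulo $4$), and in the parametrization $w(2)=4t+2$, because $4t+2$ is the unique minimal generator congruent to $2\pmod 4$. Hence the map $t\mapsto \langle 4,4t+2,c+1,c+3\rangle$ is injective. It then remains only to count: $|\{1,\ldots,c/4\}|=c/4$ when $c\equiv 0\pmod 4$ and $|\{1,\ldots,(c-2)/4\}|=(c-2)/4$ when $c\equiv 2\pmod 4$.

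There is no real obstacle here; the work was done in Proposition \ref{4:1}, and the present statement is essentially a bookkeeping summary. The only point one has to be a little careful about is the injectivity of the parametrization, but as indicated above this is immediate from the invariance of the Apéry element $w(2)$.
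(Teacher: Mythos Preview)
Your argument is correct and is exactly the approach the paper intends: the corollary is stated there without proof, as an immediate counting consequence of Proposition~\ref{4:1}, and you have simply written out the details (including the injectivity of $t\mapsto S$ via $w(2)=4t+2$) that the paper leaves implicit.
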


\begin{example}
The five Arf numerical semigroups with multiplicity $4$ and conductor $20$ (Frobenius number 19) are
$$\langle  4,6,21,23 \rangle=\{0,4,6,8,10,12,14,16,18,20, \rightarrow\},$$
$$\langle  4,10,21,23 \rangle=\{0,4,8,10,12,14,16,18,20, \rightarrow\},$$
$$\langle  4,14,21,23 \rangle=\{0,4,8,12,14,16,18,20, \rightarrow\},$$
$$\langle  4,18,21,23 \rangle=\{0,4,8,12,16,18,20, \rightarrow\},$$
$$\langle  4,21,22,23 \rangle=\{0,4,8,12,16,20, \rightarrow \rangle.$$
The five Arf numerical semigroups with multiplicity $4$ and conductor $22$ (Frobenius number 21) are
$$\langle  4,6,23,25 \rangle=\{0,4,6,8,10,12,14,16,18,20,22, \rightarrow\},$$
$$\langle  4,10,23,25 \rangle=\{0,4,8,10,12,14,16,18,20,22, \rightarrow\},$$
$$\langle  4,14,23,25 \rangle=\{0,4,8,12,14,16,18,20,22, \rightarrow\},$$
$$\langle  4,18,23,25 \rangle=\{0,4,8,12,16,18,20,22, \rightarrow\},$$
$$\langle  4,22,23,25 \rangle=\{0,4,8,12,16,20,22, \rightarrow\}.$$
The only Arf numerical semigroup with multiplicity $4$ and conductor  $23$ (Frobenius number 22) is
$$\langle  4,23,25,26 \rangle=\{0,4,8,12,16,20,23, \rightarrow\}.$$
\end{example}

\subsection{Arf Numerical Semigroups with multiplicity five}

Let $S$ be an Arf numerical semigroup with multiplicity $5$ and conductor $c$. Then $c \equiv 0, 2, 3$ or $4 \!\pmod 5)$ and the following proposition describes all Arf numerical semigroups $S$ with multiplicity $5$ and conductor $c$.

\begin{proposition} {\label {5:1}}
Let $S$ be an Arf numerical semigroup $S$ with multiplicity $5$ and conductor $c$.
\begin{enumerate}[(i)]
\item If  $c \equiv 0\!\pmod 5$, then either
$S=\langle  5, c-2, c+1, c+2, c+4 \rangle$  or 
$S=\langle  5, c+1, c+2, c+3, c+4 \rangle$.
\item If  $c \equiv 2\!\pmod 5$, then
$S=\langle  5, c, c+1, c+2, c+4 \rangle$.
\item If  $c \equiv 3\!\pmod 5$, then
$S=\langle  5, c, c+1, c+3, c+4 \rangle$.
\item If  $c \equiv 4\!\pmod 5$, then either
$S=\langle  5, c-2, c, c+2, c+4 \rangle$  or 
$S=\langle  5, c, c+2, c+3, c+4 \rangle$.
\end{enumerate}
\end{proposition}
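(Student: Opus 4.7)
The plan is to mimic the multiplicity $3$ and $4$ analyses: since $\mathrm{Ap}(S,5)=\{0,w(1),w(2),w(3),w(4)\}$ completely determines $S$, it suffices to pin down each $w(i)$ using Lemmas \ref{2:4} and \ref{2:5} together with Selmer's formula. First, in each of the four residue classes $c\bmod 5\in\{0,2,3,4\}$, Lemma \ref{2:4} gives $w(1)$ and $w(4)$ explicitly. Second, Selmer's formula forces $\max\mathrm{Ap}(S,5)=c+4$, and since the $w(i)$ are distinguished by their residue mod $5$, the unique $i\in\{0,\ldots,4\}$ with $c+4\equiv i\pmod 5$ satisfies $w(i)=c+4$: in case (i) this is $w(4)$, in (ii) it is $w(1)$, in (iii) it is $w(2)=c+4$, and in (iv) it is $w(3)=c+4$.

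Next, I would invoke Lemma \ref{2:5} with $m=5$ and $k\in\{1,2\}$. The four inequalities
\begin{equation*}
w(2)\le w(1)+1,\quad w(4)\le w(2)+2,\quad w(3)\le w(4)+4,\quad w(1)\le w(3)+3
\end{equation*}
together with the elementary bounds $w(i)\le c+4$ and $w(i)\equiv i\pmod 5$ are enough to solve for the remaining two coordinates in every case. The argument is a systematic sandwich: for instance in case (ii), $w(4)\le w(2)+2$ becomes $w(2)\ge c$, while $w(2)\le c+4$ plus the residue condition forces $w(2)\le c$, so $w(2)=c$; symmetrically $w(1)\le w(3)+3$ and the residue bound yield $w(3)=c+1$. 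The same sandwich argument gives $w(3)=c$ in case (iii). In cases (i) and (iv) the corresponding bounds are not quite as tight: one pair of inequalities still pins one coordinate down uniquely ($w(2)=c+2$ in (i), $w(3)=c+4$ in (iv)), but the other pair only shows that the last coordinate lies in a set of exactly two values — differing by $5$ — matching the two alternatives listed.

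Having listed all candidate Apery sets $\mathrm{Ap}(S,5)$, I would then verify that each candidate actually produces an Arf numerical semigroup. The multiplicity $5$ and conductor $c=\max\mathrm{Ap}(S,5)-5+1$ are correct by construction, so only the Arf property remains. This is a finite check: for every pair $w(i)\ge w(j)$ in the Apery set (including $w(0)=0$), confirm that $2w(i)-w(j)\in S$, which is straightforward once the explicit generating set is written down and recognised as containing every residue class past a certain point. Alternatively, one can invoke Proposition \ref{seq-ns} by exhibiting an Arf sequence that produces each candidate — for example, in case (ii) the sequence $(2,\ldots,2,3)$ with an appropriate number of $2$'s, and similar near-constant sequences in the other cases.

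The main obstacle will be the two-alternative cases (i) and (iv): there one must (a) check that the Arf-driven lower bounds $w(2)\ge c+2$ and $w(2)\ge c-2$ are really the best one can deduce from Lemmas \ref{2:4}--\ref{2:5} (so that no candidate is spuriously excluded), and (b) verify the Arf property for both the ``small'' and ``large'' choice — in case (iv) in particular the choice $w(2)=c-2<w(4)=c<w(1)=c+2<w(3)=c+4$ has a non-monotone ordering of the $w(i)$ and requires the check $2w(2)-w(j)\in S$ for every smaller $w(j)$ to be carried out explicitly.
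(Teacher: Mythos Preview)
Your proposal is correct and follows essentially the same approach as the paper: determine $w(1)$ and $w(4)$ from Lemma~\ref{2:4}, identify $\max\mathrm{Ap}(S,5)=c+4$ via Selmer, and then sandwich the remaining $w(i)$ using the inequalities of Lemma~\ref{2:5} together with the residue constraint and the bound $w(i)\le c+4$. The paper carries out exactly this computation case by case (using only the two inequalities $w(4)\le w(2)+2$ and $w(1)\le w(3)+3$, which already suffice), and, like you, simply asserts rather than spells out the verification that each listed semigroup is Arf.
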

\vspace*{-0.1 cm}
\begin{proof}
We first note that all the semigroups given in the proposition are Arf numerical semigroups.

Let $S$ be an Arf numerical semigroup with multiplicity $5$ and conductor $c$. As we have already noted, $c-1$ cannot be a multiple of $5$. So, $c \equiv k\!\pmod m$ for some $k \in \{0,2,3,4\}$. Lemma \ref{2:4} asserts that $w(4)=c-k+4$ and
\[w(1)=\begin{cases}
c+1 & {\mbox{if }} k = 0,\\
c-k+6 & {\mbox{if }}  k \neq 0.
\end{cases} \]
Moreover, applying Lemma {\ref{2:5}}
\begin{equation}\label{eq:5-2}
w(4) \leq w(2)+2,
\end{equation}
and from $w(1)=w(5-4) \leq w(5-2)+(5-2)=w(3)+3$ we get
\begin{equation}\label{eq:5-3}
w(1) \leq w(3)+3.
\end{equation}
Let us also note that $w(i) \leq c+4$ for all $i\in\{1, 2, 3, 4\}$.

\emph{(i)} If $c \equiv 0\!\pmod 5$, then $ w(1)=c+1$ and $w(4)=c+4$. In light of inequality \eqref{eq:5-2}, we get $c+4=w(4) \leq w(2)+2 \leq c+6$ which implies $w(2)=c + 2$. Similarly, using \eqref{eq:5-3}, we get $c+1=w(1)\leq w(3)+3  \leq c+7$, which yields $c-2 \leq  w(3) \leq c+4$.  This implies $w(3)=c-2$ or $w(3)=c+3$. It follows that
\[S=\langle  5,  c-2,  c+1,  c+2,  c+4 \rangle \ {\mbox{or}} \
 S=\langle  5,  c+1,  c+2,  c+3,  c+4 \rangle.\]
 
\emph{(ii)} If $c \equiv 2\!\pmod 5$, then $w(1)=c+4$ and $w(4)=c+2$. Using inequality \eqref{eq:5-2},  we get
$c+2 = w(4) \leq w(2) + 2 \leq c+6$,  which gives $c \leq w(2)\leq c+4$. Consequently, $w(2)=c$. Analogously, from  \eqref{eq:5-3}, we get
$c+4 = w(1) \leq w(3)+3 \leq c+7$, which yields $ c+1 \leq w(3) \leq c+4$
and this implies $w(3)= c+1$. It follows that
\[S=\langle  5,  c,  c+1,  c+2,  c+4 \rangle.\]

\emph{(iii)} If $c \equiv 3\!\pmod 5$, then $w(1)=c+3$ and $w(4)=c+1$. In this case, $c+4=w(2)$ is the largest element of $\mathrm{Ap}(S,5)$. As before, \eqref{eq:5-3} yields
$c+3=w(1)  \leq w(3)+3  \leq c+7$, and thus $c \leq w(3) \leq c+4$. Hence $w(3)=c$, and 
\[S=\langle  5,  c,  c+1,  c+3,  c+4 \rangle.\]

\emph{(iv)} If $c \equiv 4\!\pmod 5$, then $ w(1)=c+2$ and $w(4)=c$. In this case, $c+4=w(3)$ is the largest element of $\mathrm{Ap}(S,5)$.  Applying \eqref{eq:5-2}, we get
$c= w(4) \leq w(2)+2 \leq c+6$, whence $c-2 \leq w(2) \leq c+4$. 
Thus $w(2)=c - 2$ or $w(2)=c+3$. It follows that
\[S=\langle  5,  c-2,  c,  c+2,  c+4 \rangle \ {\mbox{or}} \
 S=\langle  5,  c,  c+2,  c+3,  c+4 \rangle.\qedhere \]
\end{proof} 

As a consequence of Proposition {\ref{5:1}}, we can count the number of Arf numerical semigroups with multiplicity $5$ and conductor $c$, with $c$ an integer greater than or equal to five and $c \not \equiv 1\!\pmod 5$.

\begin{corollary}
Let $c$ be an integer such that $c \geq 5$ and $c \not \equiv 1\!\pmod 5)$. Then
\[\mathrm n_A(c,5)= \begin{cases}
2 & {\mbox{if }}  c \equiv 0\!\pmod 5 \  {\mbox{or}} \ c \equiv 4\!\pmod 5,\\
1 & {\mbox{if }}  c \equiv 2\!\pmod 5 \  {\mbox{or}} \ c \equiv 3\!\pmod 5.
\end{cases}\]
\end{corollary}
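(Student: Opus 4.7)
The statement is essentially a bookkeeping consequence of Proposition~\ref{5:1}, which already enumerates, for each admissible residue class of $c$ modulo $5$, \emph{all} Arf numerical semigroups with multiplicity $5$ and conductor $c$. The plan is to read off the count from the four cases and check that, in the two cases where Proposition~\ref{5:1} offers two candidate forms, the two candidates are genuinely different numerical semigroups.

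First I would split according to $\overline{c}\in\{0,2,3,4\}$ (recall $\overline{c}=1$ is impossible by the remark preceding Lemma~\ref{2:4}). For $c\equiv 2\pmod 5$ and $c\equiv 3\pmod 5$, Proposition~\ref{5:1}(ii)--(iii) give a unique semigroup, so $\mathrm{n}_A(c,5)=1$ with no further work. For $c\equiv 0\pmod 5$ and $c\equiv 4\pmod 5$, Proposition~\ref{5:1}(i) and (iv) give two candidate presentations; I would verify they yield distinct semigroups, which then immediately gives $\mathrm{n}_A(c,5)=2$.

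The distinctness check is the only nontrivial step, and it is short. In case (i), the first semigroup contains the generator $c-2$, while the second, $\langle 5,c+1,c+2,c+3,c+4\rangle=\{0,5,10,\ldots,c,c+1,\ldots\}$, has all positive elements either divisible by $5$ or $\ge c+1$, so $c-2\notin S$ (since $c-2\equiv 3\pmod 5$ and $c-2<c+1$). Hence the two semigroups cannot coincide. Case (iv) is analogous: the first contains $c-2\equiv 2\pmod 5$ while the second, $\langle 5,c,c+2,c+3,c+4\rangle$, has smallest element in the residue class $2$ equal to $c+2>c-2$, so again the two differ. Assembling the four cases yields the piecewise formula for $\mathrm{n}_A(c,5)$ exactly as stated.
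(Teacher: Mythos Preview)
Your proof is correct and follows the same approach as the paper, which simply states the corollary as an immediate consequence of Proposition~\ref{5:1} without further argument. Your explicit verification that the two candidate semigroups in cases (i) and (iv) are genuinely distinct is a useful detail that the paper leaves implicit.
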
 

\begin{example}
The two Arf numerical semigroups with multiplicity $5$ and conductor $30$ (Frobenius number 29) are
$\langle  5,28,31,32,34 \rangle=\{0,5,10,15,20,25,28,30, \rightarrow\}$ and
$ \langle  5,31,32,33,34 \rangle=\{0,5,10,15,20,25,30 \rightarrow\}.$ 
\begin{verbatim}
gap> l:=NumericalSemigroupsWithFrobeniusNumber(29);;
gap> l5:=Filtered(l,s->MultiplicityOfNumericalSemigroup(s)=5);;
gap> Filtered(l5,IsArfNumericalSemigroup);
[ <Numerical semigroup with 5 generators>, 
  <Numerical semigroup with 5 generators> ]
gap> List(last,MinimalGenerators);
[ [ 5, 28, 31, 32, 34 ], [ 5, 31, 32, 33, 34 ] ]
\end{verbatim}

The only Arf numerical semigroup with multiplicity $5$ and conductor $32$ (Frobenius number 31) is
$\langle  5,32,33,34,36 \rangle=\{0,5,10,15,20,25,30,32, \rightarrow\}.$ 

The only Arf numerical semigroup with multiplicity $5$ and conductor $33$ (Frobenius number 32) is
$\langle  5,33,34,36,37 \rangle=\{0,5,10,15,20,25,30,33 \rightarrow\}.$ 

The two Arf numerical semigroups with multiplicity  $5$ and conductor $34$ (Frobenius number $33$) are
\[
\begin{array}{l}
\langle  5,32,34,36,38 \rangle=\{0,5,10,15,20,25,30,32,34, \rightarrow\},\\ 
\langle  5,34,36,37,38 \rangle=\{0,5,10,15,20,25,30,34, \rightarrow\}.
\end{array}
\]
\end{example}

\subsection{Arf Numerical Semigroups with multiplicity six}

To determine all Arf numerical semigroups with multiplicity $6$ and a given conductor $c$, we will make use of the ratio of a numerical semigroup. Recall that for a given numerical semigroup $S$, the \emph{ratio} is the smallest integer in $S$ that is not a multiple of its multiplicity, or in other words, the smallest minimal generator greater than the multiplicity \cite{tres-cuatro}. We will use $r$ to denote the ratio of $S$.
 
Let $S$ be an Arf numerical semigroup with multiplicity $6$ and conductor $c$. Then $c \equiv 0, 2, 3, 4$ or $5 \!\pmod 6)$. 

Since $c+5$ is the largest element of the minimal set of generators of $S$, as the second least element of the minimal set of generators for $S$, the ratio of $S$ satisfies
\begin{equation*} 
r \leq c +1.
\end{equation*}
The following proposition describes all Arf numerical semigroups $S$ with multiplicity $6$ and conductor $c$.

\begin{proposition} {\label{6:1}}
Let $S$ be an Arf numerical semigroup with multiplicity $6$ and conductor $c$.
\begin{enumerate}[(i)]
\item If  $c \equiv 0\!\pmod 6$, then $S$ equals one of the following numerical semigroups
\[\begin{array}{l}
\langle  6, c+1, c+2, c+3, c+4, c+5 \rangle,\\
\langle  6, 6u+2, 6u+4, c+1, c+3, c+5 \rangle,\\
\langle  6, 6u+3, c+1, c+2, c+4, c+5 \rangle, \\
\langle  6, 6u+4, 6u+8, c+1, c+3, c+5 \rangle,
\end{array}\]
for some integer $u$ such that  $1 \leq u \leq \frac{c}{6} - 1$.

\item If  $c \equiv 2\!\pmod 6$, then $S$ is of one of the following forms
\[\begin{array} {l}
 \langle  6, 6t+2, 6t+4, c+1, c+3, c+5 \rangle,\\
 \langle  6, 6u+3, c, c+2, c+3, c+5 \rangle ,\\
 \langle  6, 6u+4, 6u+8, c+1, c+3, c+5 \rangle,
\end{array}\]
for some integers $t$ and $u$ with $1 \leq t \leq \frac{c - 2}{6} $ and $1 \leq u \leq \frac{c - 2}{6} - 1$.

\item If  $c \equiv 3\!\pmod 6$, then
\[S=\langle  6, 6t+3, c+1, c+2, c+4, c+5 \rangle,\]
for some integer $t$ such that $1 \leq t \leq \frac{c - 3}{6}$.

\item If  $c \equiv 4\!\pmod 6$, then $S$ is equal to one of the following numerical semigroups
\[\begin{array} {l}
\langle  6, 6t+2, 6t+4, c +1, c+3, c+5 \rangle,\\
\langle  6, 6t+4, 6t+8, c +1, c+3, c+5 \rangle,
\end{array}\]
for some integer $t$ with $1 \leq t \leq \frac{c - 4}{6}$.

\item If $c \equiv 5\!\pmod 6)$, then $S$ is of one of the following forms
\[\begin{array} {l}

\langle  6, c, c+2, c+3, c+4, c+5 \rangle,\\
\langle  6, 6t+3, c, c+2, c+3, c+5 \rangle,
\end{array}\]
for some integer $t$ with $1 \leq t \leq \frac{c - 5}{6}$.
\end{enumerate}
\end{proposition}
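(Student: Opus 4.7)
The plan is to follow the template of Proposition~\ref{5:1}, doing a case analysis on the residue $k=\overline c\in\{0,2,3,4,5\}$ (the value $k=1$ is ruled out because $c-1\notin S$). Both directions of the classification must be established. For the backward direction I would verify directly that each listed semigroup is Arf with the stated conductor by writing out its Apéry set $\{0,w(1),\ldots,w(5)\}$ and checking $2x-y\in S$ for $x\ge y$ in $S$; since every element of $S$ has the form $w(i)+6a$ and only small $a$ interact nontrivially, this is a finite mechanical verification per family.

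For the converse, let $S$ be an Arf numerical semigroup with multiplicity~$6$ and conductor~$c$. First use Lemma~\ref{2:4} to fix $w(1)$ and $w(5)$ explicitly in terms of $c$ and~$k$. Selmer's formula pins the largest Apéry element to $c+5$; reading off its residue mod~$6$ identifies which $w(i)$ equals $c+5$, and in the cases $k\in\{3,4,5\}$ this identification forces $w(2)$, $w(3)$ or $w(4)$ respectively to its maximum value. Next, Lemma~\ref{2:5} (at its two admissible parameters) supplies
\[
w(2)\le w(1)+1,\quad w(4)\le w(5)+5,\quad w(4)\le w(2)+2,\quad w(2)\le w(4)+4,
\]
which combined with the congruences $w(i)\equiv i\pmod 6$ and $w(i)\le c+5$ force $w(4)-w(2)\in\{-4,2\}$ and parametrize the pair $(w(2),w(4))$ by a single integer whose admissible range depends on~$k$.

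The subtle coordinate is $w(3)$, which Lemma~\ref{2:5} does not reach. I would control it by specialising the cocycle inequality of Lemma~\ref{ArfCocycleToCocycle} to the index pair $(i,j)=(3,4)$: a short computation in Kunz coordinates, analysing the three sign cases of the auxiliary integer $t=\lceil(w(3)-w(4))/6\rceil$, shows that this inequality is equivalent to the dichotomy \emph{either $w(3)$ attains its maximum or $w(4)$ attains its maximum}. In the latter regime, Corollary~\ref{cor:more-eq} (which gives $x_4\le x_2\le x_4+1$ and $x_2\le x_1$) combined with the Selmer identification above forces $w(2)$ also to its maximum. Thus per residue~$k$ the analysis splits into two regimes: either $w(3)$ is maximal and $(w(2),w(4))$ ranges over the one-parameter family from Lemma~\ref{2:5}, or $w(3)$ is strictly submaximal and $(w(2),w(4))$ is forced to its maximum while $w(3)$ varies over the admissible multiples of $6$ plus~$3$. (For $k=4$ the Selmer identification already forces $w(3)$ maximal, collapsing the second regime, and for $k=3$ the first regime collapses to a single tuple; these are consistent with the proposition's lists.) Enumerating per $k\in\{0,2,3,4,5\}$ and converting Apéry data back to minimal generating systems reproduces exactly the five lists.

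I expect the main obstacle to be the Arf condition at the pair $(3,4)$: it is not captured by inspection of $\mathrm{Ap}(S,6)$ alone but must be extracted from Lemma~\ref{ArfCocycleToCocycle}, whose sign-cases for $t$ must be carefully reassembled into the clean dichotomy described above. The residues $k\in\{3,4,5\}$ additionally need care because the largest Apéry element is no longer $w(5)$, reshuffling which $w$-values Selmer pre-determines; once this bookkeeping and the $(3,4)$ dichotomy are in place, the final enumeration per residue is routine.
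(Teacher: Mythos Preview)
Your plan is correct and would succeed, but it diverges from the paper's argument in two notable ways. First, the paper does not invoke the cocycle machinery of Lemma~\ref{ArfCocycleToCocycle} at all; the dichotomy you extract from the pair $(i,j)=(3,4)$ is obtained in the paper much more cheaply from Lemma~\ref{2:3} applied at $j=4$: either $w(3)<w(4)$, whence $c\le w(4)-1$ and so $w(4)$ is maximal, or $w(4)<w(3)$, whence $c\le w(3)$ and $w(3)$ is maximal. This is precisely your dichotomy, but reached in one line without any sign-case analysis on~$t$. Second, the paper organizes the casework around the \emph{ratio} $r$ (the smallest minimal generator exceeding $6$) rather than around your two regimes. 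Declaring which $w(i)$ equals $r$ immediately singles out the smallest Ap\'ery element, and then Lemma~\ref{2:3} applied at neighbouring indices forces the adjacent $w$-values to the top of their range; combined with \eqref{eq:6-2} this pins everything down per sub-case. What your approach buys is independence from the ratio concept and a single unifying inequality; what the paper's approach buys is that every step is elementary (only Lemmas~\ref{2:2}--\ref{2:5} are used) and the case split has a transparent combinatorial meaning. Either way the enumeration per residue lands on the same five lists.
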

\begin{proof}
We first note that all the semigroups given in the proposition are Arf numerical semigroups.

Let $S$ be an Arf numerical semigroup with multiplicity $6$ and conductor $c$. As we have already mentioned, $c \equiv k\!\pmod 6$ for some $k \in \{0,2,3,4,5\}$. By Lemma \ref{2:4}, we have $w(5)=c-k+5$ and
\begin{equation}\label{eq:6-1}
w(1)= \begin{cases}
c+1 & {\mbox{if }} k = 0,\\
c-k+7 & {\mbox{if }}  k \neq 0.
\end{cases} 
\end{equation} 
Also, $ w(4) \leq w(2)+2$  and $w(2)=w(6-4) \leq w(6-2)+(6-2)=w(4)+4$ by Lemma {\ref{2:5}}. Combining these two inequalities we get
\begin{equation}\label{eq:6-2}
w(4)-2 \leq w(2) \leq w(4)+4.
\end{equation}
Let us also note that $w(i) \leq c+5$ for all $i\in\{1, 2, 3, 4, 5\}$.

\emph{(i)} If $c \equiv 0\!\pmod 6$, then $ w(1)=c+1$ and $w(5)=c+5$ by \eqref{eq:6-1}.  The ratio
$r$ of $S$ is one of $w(1)=c+1$, $w(2)$, $w(3)$ or $w(4)$.
\begin{enumerate}[--]
\item If $r = c+1$, then $w(2)= c+2$, $w(3)= c+3$, $w(4)= c+4$, and consequently
\[S=\langle  6, c+1, c+2, c+3, c+4, c+5 \rangle.\]
\item If $r = w(2)$, then  $w(2)<c$, $w(2) < w(3)$ and $w(2)<w(4)$ by the definition of ratio. Hence $w(3)=c+3$ by Lemma {\ref{2:3}} and $w(2) = w(4)-2$ or equivalently, $w(4)=w(2)+2$ by \eqref{eq:6-2}. Write $w(2)=6u+2$. Then $w(4)=6u+4$ and
\[S=\langle  6, 6u+2, 6u+4, c+1, c+3, c+5 \rangle,\]
with $1 \leq u \leq \frac{c}{6}-1$.
\item If $r = w(3)$, then $w(3)<c$. Also $w(3) < w(2)$ and $w(3) < w(4)$ by the definition of ratio. Hence $w(2)= c+2$ and $w(4)= c+4$ by Lemma {\ref{2:3}}. By setting $w(3)=6u+3$, we get
\[S=\langle  6, 6u+3, c+1, c+2, c+4, c+5 \rangle,\]
with $1 \leq u \leq \frac{c}{6}-1.$
\item  If $r = w(4)$, then  the definition of ratio forces $w(4)<c$, $w(4) < w(3)$ and $w(4) < w(2)$. Hence  $w(3)= c+3$  by Lemma {\ref{2:3}} and $w(2) = w(4)+4$ by \eqref{eq:6-2}. Put $w(4)=6u+4$. Then $w(2)=6u+8$ and we have
\[S=\langle  6, 6u+4, 6u+8, c+1, c+3, c+5 \rangle,\]
with $1 \leq u \leq \frac{c}{6}-1.$

\end{enumerate}

\emph{(ii)} If $c \equiv 2\!\pmod 6$, then $ w(1)=c+5$ and $w(5)=c+3$ by \eqref{eq:6-1}. Also we have in this setting that $r\in\{ w(2), w(3), w(4)\}$.
\begin{enumerate}[--]
\item If $r = w(2)$, the definition of ratio yields $w(2) \leq c$, $w(2) < w(3)$ and $w(2) < w(4)$. Hence  $w(3)= c+1$ by Lemma {\ref{2:3}}, and $w(2) = w(4)-2$, or equivalently, $w(4)=w(2)+2$ by \eqref{eq:6-2}.  Write $w(2)=6t+2$. Then  $w(4)=6t+4$  and we obtain
\[S=\langle  6, 6t+2, 6t+4, c+1, c+3, c+5 \rangle,\]
with $1 \leq t \leq \frac{c-2}{6}$.
\item If $r = w(3)$, then $w(3)< w(2)$ and $w(3) < w(4)$ by the definition of the ratio. Hence $w(4)= c+2$  and $w(2)=c$ by Lemma {\ref{2:3}}. Note also that $w(3) \leq c-5$, since $w(3)<c$ and $w(3)\equiv c+1 \!\pmod 6$. By denoting $w(3)=6u+3$, we get
\[S=\langle  6, 6u+3, c, c+2, c+3, c+5 \rangle,\]
where $1 \leq u \leq \frac{c-2}{6}-1$.
\item If $r = w(4)$, then $w(4)<c$, $w(4)<w(2)$ and $w(4)<w(3)$ by the definition of ratio. Hence $w(3)= c+1$ by Lemma {\ref{2:3}}, and $w(2) = w(4)+4$ by \eqref{eq:6-2}. Put $w(4)=6u+4$. Then $w(2) = 6u+8$, and we have
\[S=\langle  6, 6u+4, 6u+8, c+1, c+3, c+5 \rangle,\]
with $1 \leq u \leq \frac{c-2}{6}-1$.
\end{enumerate}

\emph{(iii)} If $c \equiv 3\!\pmod 6$, then $ w(1)=c+4$ and $w(5)=c+2$ by \eqref{eq:6-1}. In this case, $c+5=w(2)$ is the largest element in $\mathrm{Ap}(S,6)$. Using \eqref{eq:6-2},
$c+5 =w(2) \leq w(4)+4$ which implies  $c+1 \leq w(4)$ and thus $w(4)=c+1$. Therefore, the only possibility for the ratio is $r = w(3)$ and if we express $w(3)=6t+3$, we get
\[S=\langle  6, 6t+3, c+1, c+2, c+4, c+5 \rangle,\]
where $1 \leq t \leq \frac{c-3}{6}$.

\emph{(iv)} If $c \equiv 4\!\pmod 6$, then $ w(1)=c+3$ and $w(5)=c+1$ by \eqref{eq:6-1}. In this case, $c+5 =w(3)$ is the largest element of $\mathrm{Ap}(S,6)$. Since $w(4) \leq c$, the ratio $r$ of $S$ is either $w(2)$ or $w(4)$.
\begin{enumerate}[--]
\item If $r = w(2)$, then $w(2)< w(4)$ and thus $w(4)=w(2)+2$ by  \eqref{eq:6-2}. Write $w(2)$ as $w(2)=6t+2$. Then $w(4)=6t+4$ and
\[S=\langle  6, 6t+2, 6t+4, c+1, c+3, c+5 \rangle,\]
with $1 \leq t \leq \frac{c-4}{6}$.
\item If $r = w(4)$, then $w(4) < w(2)$ and thus $w(2)=w(4)+4$ by  $(6.2)$.  Put  $w(4)=6t+4$. Then $w(2)=6t+8$ and
$$S=\langle  6, 6t+4, 6t+8, c+1, c+3, c+5 \rangle$$
\noindent where $1 \leq t \leq \frac{c-4}{6}.$
\end{enumerate}
\emph{(v)} If $c \equiv 5\!\pmod 6$, then $ w(1)=c+2$ and $w(5)=c$ by \eqref{eq:6-1}. In this case, $c+5=w(4)$ is the largest element of $\mathrm{Ap}(S,6)$. Using  \eqref{eq:6-2}, we obtain $c+5 =w(4) \leq w(2)+2$ and then $c+3 \leq w(2)$. This implies $w(2)=c+3$. Therefore, either $r = w(5) = c$ or $r = w(3)$. 
\begin{enumerate}[--]
\item If $r = c$, then 
\[S=\langle  6, c, c+2, c+3, c+4, c+5 \rangle.\]
\item  If $r = w(3)$ and if we put $w(3)=6t+3$, then 
\[S=\langle  6,  6t+3,  c,  c+2, c+3, c+5 \rangle,\]
where $1 \leq t \leq \frac{c-5}{6}$.\qedhere
\end{enumerate}
\end{proof}

If $c$ is an integer such that $c \geq 6$ and $c \not \equiv 1\!\pmod 6)$, then Proposition {\ref{6:1}} can be used to count Arf numerical semigroups with multiplicity $6$ and conductor $c$.

\begin{corollary}
Let $c$ be an integer  such that $c \geq 6$  and $c \not \equiv 1\!\pmod 6)$. Then
\[\mathrm n_A(c,6)= \begin{cases}
          \frac{c}{2}-2 & {\mbox{if }}  c \equiv 0\!\pmod 6, \\
          \frac{c-2}{2}-2 & {\mbox{if }}  c \equiv 2\!\pmod 6, \\
          \frac{c-3}{6} & {\mbox{if }}  c \equiv 3\!\pmod 6,\\
          \frac{c-4}{3} & {\mbox{if }}  c \equiv 4\!\pmod 6, \\
          \frac{c+1}{6} & {\mbox{if }}  c \equiv 5\!\pmod 6.
\end{cases}
\]
\end{corollary}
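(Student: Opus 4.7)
The plan is to argue case-by-case according to the residue of $c$ modulo $6$, using Proposition~\ref{6:1} as a direct enumeration of the Arf numerical semigroups in each class, and then just summing over the admissible parameters. Since Proposition~\ref{6:1} already exhibits each such semigroup by its minimal generating system, the only thing left is a counting exercise together with a verification that the different forms listed within a single residue class do not overlap.

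First I would fix a residue class $c\equiv k\pmod 6$ with $k\in\{0,2,3,4,5\}$ and read off from Proposition~\ref{6:1} the list of possible shapes of the minimal generating system. In each case the form of $\mathrm{Ap}(S,6)$ is completely determined by $c$ and the ratio $r$, so two semigroups produced by distinct shapes (or by distinct parameter values within the same shape) are already distinct as sets of minimal generators; this rules out double counting. For instance, in the case $c\equiv 0\pmod 6$ the four shapes are distinguished by which of $c+1$, $6u+2$, $6u+3$, or $6u+4$ appears as the ratio $r$, and within each of the last three shapes distinct values of $u$ give distinct $r$.

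Once non-overlap is checked, I would just count. Writing $q_k=(c-k)/6$ for the number of times $6$ fits into $c-k$, the enumeration gives:
\begin{itemize}
\item $k=0$: one semigroup of the first type, plus $q_0-1$ of each of the three $u$-parametrized types, for a total $1+3(q_0-1)=3q_0-2=\frac{c}{2}-2$;
\item $k=2$: $q_2$ of the first type, $q_2-1$ each of the remaining two, giving $3q_2-2=\frac{c-2}{2}-2$;
\item $k=3$: just $q_3=\frac{c-3}{6}$;
\item $k=4$: $q_4$ of each of the two types, giving $2q_4=\frac{c-4}{3}$;
\item $k=5$: one of the first type plus $q_5=\frac{c-5}{6}$ of the second, yielding $1+\frac{c-5}{6}=\frac{c+1}{6}$.
\end{itemize}
This matches the five lines of the claimed formula for $\mathrm n_A(c,6)$.

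The only place that requires care is checking that the stated ranges for $t$ (or $u$) in Proposition~\ref{6:1} are precisely the admissible ones, i.e. that every value in the stated range yields an Arf semigroup with the prescribed conductor (not a strictly smaller one) and that no values outside the range do. The ``no values outside'' part is already built into the upper bounds there, since, for example, a ratio $6u+4$ with $6u+8>c$ would make $6u+8$ fall into the post-conductor block and collapse the form. The ``no early conductor'' part is handled by requiring the parameter to be at least $1$, so that the generator strictly below the $c+1,\ldots,c+5$ block is really below it. With these endpoints checked, the arithmetic above finishes the proof; no further difficulty is expected beyond bookkeeping.
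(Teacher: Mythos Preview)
Your proposal is correct and is exactly the approach the paper intends: the paper gives no explicit proof of this corollary, treating it as an immediate count of the parametrized families listed in Proposition~\ref{6:1}, and your case-by-case tally (with the non-overlap remark) is precisely that count. The extra checks you outline about parameter ranges and conductor are already absorbed in the paper's proof of Proposition~\ref{6:1} (``We first note that all the semigroups given in the proposition are Arf numerical semigroups''), so nothing further is needed.
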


\begin{example}
The 13 Arf numerical semigroups with multiplicity $6$ and conductor $30$ (Frobenius number 29) are
$$ \langle  6,31,32,33,34,35 \rangle=\{0,6,12,18,24,30, \rightarrow\},$$
$$\langle  6,8,10,31,33,35 \rangle=\{0,6,8,10,12,14,16,18,20,22,24,26,28,30, \rightarrow\},$$
$$ \langle  6,14,16,31,33,35 \rangle=\{0,6,12,14,16,18,20,22,24,26,28,30, \rightarrow\},$$
$$ \langle  6,20,22,31,33,35 \rangle=\{0,6,12,18,20,22,24,26,28,30, \rightarrow\},$$
$$ \langle  6,26,28,31,33,35 \rangle=\{0,6,12,18,24,26,28,30, \rightarrow\},$$
$$\langle  6,9,31,32,34,35 \rangle=\{0,6,9,12,15,18,21,24,27,30, \rightarrow\},$$
$$\langle  6,15,31,32,34,35 \rangle=\{0,6,12,15,18,21,24,27,30, \rightarrow\},$$
$$\langle  6,21,31,32,34,35 \rangle=\{0,6,12,18,21,24,27,30, \rightarrow\},$$
$$\langle  6,27,31,32,34,35 \rangle=\{0,6,12,18,24,27,30, \rightarrow\},$$
$$\langle  6,10,14,31,33,35 \rangle=\{0,6,10,12,14,16,18,20,22,24,26,28,30, \rightarrow\},$$
$$\langle  6,16,20,31,33,35 \rangle=\{0,6,12,16,18,20,22,24,26,28,30, \rightarrow\},$$
$$\langle  6,22,26,31,33,35 \rangle=\{0,6,12,18,22,24,26,28,30, \rightarrow\},$$
$$\langle  6,28,32,31,33,35 \rangle=\{0,6,12,18,24,28,30, \rightarrow\}.$$
\end{example}

\end{document}